\newtheorem{theorem}{Theorem}[section]
\newtheorem{lemma}[theorem]{Lemma}
\theoremstyle{definition}
\newtheorem{definition}[theorem]{Definition}
\newtheorem{example}[theorem]{Example}
\newtheorem{note}[theorem]{Note}
\theoremstyle{remark}
\newtheorem{remark}[theorem]{Remark}
\numberwithin{equation}{section}
\begin{document}

\title{ bounded variation on the Sierpi\'nski Gasket}

\thanks{}

\author{S. Verma}
\address{Department of Mathematics, IIT Delhi, New Delhi, India 110016 }
\email{saurabh331146@gmail.com}

\author{A. Sahu}
\address{Department of Mathematics, IIT Guwahati, Guwahati, India 781039}


\email{sahu.abhilash16@gmail.com}

\subjclass[2010]{Primary 28A80, 26A45 }


 
\keywords{Sierpi\'nski Gasket, Box dimension, Hausdorff dimension, Bounded Variation, Biharmonic functions}

\begin{abstract}
 Under certain continuity conditions, we estimate upper and lower box dimension of graph of a function defined on the Sierpi\'nski gasket. We also give an upper bound for Hausdorff dimension and box dimension of graph of function having finite energy. Further, we introduce two sets of definitions of bounded variation for a function defined on the Sierpi\'nski gasket. We show that fractal dimension of graph of a continuous function of bounded variation is $\frac{\log 3}{\log 2}.$ We also prove that the class of all bounded variation functions is closed under arithmetic operations. Furthermore, we show that every function of bounded variation is continuous almost everywhere in the sense of $\frac{\log3}{\log2}-$dimensional Hausdorff measure.

\end{abstract}

\maketitle



\section{INTRODUCTION}

The classical notion of  \emph{bounded variation} was introduced by Jordan \cite{Jordan} for a real-valued function on a closed bounded interval $[a,b]$ in $\mathbb{R}$.

We write several properties of a function which is of bounded variation on $[a,b]$ briefly here, but refer the reader to \cite{Gordon}.
\begin{definition}
Let $ f:[a,b] \rightarrow \mathbb{R}$ be a function. For each partition $ P: a=t_0<t_1<t_2 < \dots <t_n =b$ of the interval $[a,b], $ we define $$V(f,[a,b])= \sup_P \sum_{i=1}^{n} |f(t_i)-f(t_{i-1})|,$$ where the supremum is taken over all partitions $P$ of the interval $[a,b].$\\ If $V(f,[a,b]) < \infty,$ we say that $f$ is of bounded variation. 
\end{definition}
\begin{definition}
Let $f :[a,b] \to \mathbb{R}$ be a continuous function. The arc length of the curve $y=f(x)$ on the interval $[a,b]$ is defined by $L=\sup(S)$, where $$ S= \Big\{ \sum_{i=1}^{n}\sqrt{(x_i-x_{i-1})^2+(f(x_i)-f(x_{i-1}))^2} : \{x_i\}_{i=0}^n~ \text {is a partition of}~ [a,b] \Big\}.$$
If $f$ has a continuous derivative on $[a,b]$, then the arc length $L= \int_{a}^{b} \sqrt{1+(f'(x))^2}~ \mathrm{d}x.$
If $S$ is unbounded, then $f$ is said to have infinite length on the given interval.
\end{definition}

We list up certain properties of a class of bounded variation functions. 
\begin{itemize}\label{ET1}
\item The arc length of a curve $y=f(x)$ is finite if and only if $f$ is of bounded variation on $[a,b].$
\item A function $f$ is of bounded variation on an interval $[a,b]$ if and only if it can be decomposed as a difference of two monotonic increasing functions.
\item If $f$ is of bounded variation on $[a,b],$ then $f$ is differentiable almost everywhere and discontinuous at most at a countable number of points.
\end{itemize}

Let us recall the following theorem due to Liang \cite{Liang} which relate fractal dimension and bounded variation.
\begin{theorem} \label{ET4}
If $f :[a,b] \to \mathbb{R}$ is continuous and of bounded variation then $\dim_H(G_f)=\dim_B(G_f)=1.$
 \end{theorem}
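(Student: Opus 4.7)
\medskip

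\noindent\textbf{Proof proposal.} The plan is to squeeze $\dim_H(G_f)$ and $\dim_B(G_f)$ between $1$ and $1$. Since it always holds that $\dim_H \leq \underline{\dim}_B \leq \overline{\dim}_B$, it suffices to establish the lower bound $\dim_H(G_f) \geq 1$ and the upper bound $\overline{\dim}_B(G_f) \leq 1$; this forces the box dimension to exist and all three quantities to equal $1$.

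For the lower bound I would use the orthogonal projection $\pi : G_f \to [a,b]$ defined by $\pi(x, f(x)) = x$. This map is $1$-Lipschitz and surjective, and Lipschitz maps cannot raise Hausdorff dimension, so $\dim_H(G_f) \geq \dim_H([a,b]) = 1$. This step uses neither the bounded variation nor even the continuity hypothesis.

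For the upper bound I would run the classical oscillation box-count. Fix $\delta > 0$, partition $[a,b]$ into $N_\delta := \lceil (b-a)/\delta \rceil$ intervals $I_k$ of length at most $\delta$, and let $R_k(f) := \sup_{I_k} f - \inf_{I_k} f$ denote the oscillation of $f$ on $I_k$. The portion of $G_f$ above $I_k$ lies inside a $\delta \times R_k(f)$ axis-aligned rectangle, which is covered by at most $2\bigl(R_k(f)/\delta + 2\bigr)$ closed $\delta$-squares of any fixed tiling of $\mathbb{R}^2$. Summing over $k$ gives
\[
N_\delta(G_f) \;\leq\; \frac{2}{\delta}\sum_{k=1}^{N_\delta} R_k(f) \;+\; 4 N_\delta .
\]
Once I can show $\sum_k R_k(f) \leq V(f,[a,b]) < \infty$, the right-hand side is $O(1/\delta)$, and taking $\limsup_{\delta \to 0^+} \log N_\delta(G_f)/\log(1/\delta)$ yields $\overline{\dim}_B(G_f) \leq 1$.

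The one real obstacle is precisely the inequality $\sum_k R_k(f) \leq V(f,[a,b])$, and this is where both hypotheses are used. Continuity of $f$ on the compact interval $I_k$ produces points $u_k, v_k \in I_k$ with $R_k(f) = |f(u_k) - f(v_k)|$. Inserting all of the $u_k, v_k$ into the partition $\{t_0, \ldots, t_{N_\delta}\}$ in their natural order gives a refinement $P^\ast$ of $[a,b]$ whose variation sum contains all the terms $|f(u_k) - f(v_k)|$, so the variation sum over $P^\ast$ is at least $\sum_k R_k(f)$ and at most $V(f,[a,b])$, which is finite by hypothesis. The only subtlety here is ordering the interpolated points correctly within each $I_k$; beyond that, the argument is a standard Lipschitz projection estimate paired with a standard oscillation-strip box count.
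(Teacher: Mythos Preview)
Your argument is correct. Note, however, that the paper does not actually supply a proof of this statement: Theorem~\ref{ET4} is quoted as a known result of Liang~\cite{Liang} and is included only as motivation for the Sierpi\'nski-gasket analogue. The paper's own contribution in this direction is Theorem~\ref{BVSGth3}, and your proof follows exactly the template the paper uses there: the lower bound comes from the $1$-Lipschitz projection $\mathcal{T}_f:(t,f(t))\mapsto t$ (cf.\ Lemma~\ref{BVSGl1}), and the upper bound comes from the oscillation box-count $N_\delta(G_f)\le C\cdot\#\{\text{cells}\}+\delta^{-1}\sum_w R_f[\,\cdot\,]$ (cf.\ Lemma~\ref{BVSGl2}), together with the boundedness of $\sum_w R_f[\,\cdot\,]$ supplied by the bounded-variation hypothesis. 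Your handling of the one genuine point---that continuity lets you realize each oscillation $R_k(f)$ as a single increment $|f(u_k)-f(v_k)|$ and thereby dominate $\sum_k R_k(f)$ by $V(f,[a,b])$---is fine; in the SG setting the paper sidesteps this by building the oscillation directly into Definition~(A).
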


Recently, using $L^1$ Korevaar-Schoen class at the critical exponent, a notion of bounded variation on fractal domain is introduced in \cite{AT}. 
Further, they study certain properties of the class of bounded variation such as locality property, co-area estimate and Sobolev type inequalities. They also prove that functions of bounded variation induce Radon measures on the domain. Now, We turn our attention to \cite[Conjecture $5.3$]{AT}, which conveys that non-constant continuous (or smooth) functions of bounded variation on the Sierpi\'nski gasket can not exist, this is absurd. However, our definition of bounded variation on SG includes all Lipschitz continuous functions. 
\par

In this paper, we introduce two sets of new definitions of bounded variation for functions defined on the Sierpi\'nski gasket. Further, we study similar properties of such functions as above. Our approach is different from that of \cite{AT} in the sense that we are focused on oscillation of the function. We hope that our notion will find further applications in the research area related to Fourier series, calculus of variations, Koksma-Hlawka type inequalities on the Sierpi\'nski gasket. The reader is encouraged to see \cite{APST,BCT,VVC1,VVC2,VVC3} and references therein for recent works on bounded variation functions defined on non-fractal domains.

\section{SETUP}
\subsection{Code space}
For this part, we refer the reader to \cite{MFB}.
Let $(X, d)$ be a complete metric space. Let $\{f_1,f_2,\dots , f_N\}$ be a finite sequence of contraction maps, $f_n: X \rightarrow X,$ for $n=1,2, \dots , N.$ Then $ \mathcal{F}:=\{X; f_1,f_2, \dots ,f_N\}$ is called a hyperbolic iterated function system or, briefly, an IFS.
A map $f_n: X \rightarrow X$ is contraction when there is a number $0 \le c_n < 1$ such that $ d(f_n(x),f_n(y)) \le c_n d(x,y)$ for all $x,y \in X.$ The number $c_n$ is called a contraction factor for $f_n$ and the number $ c_{\mathcal{F}}= \max\{c_1,c_2, \dots , c_N\}$ is called a contraction factor for $\mathcal{F}.$ \\
Let $ \Omega$ denote the set of all infinite sequences of symbols $\{\sigma_k\}_{k=1}^{\infty} $ belonging to the alphabet $\{1,2,\dots ,N\}.$ We write $ \sigma= \sigma_1\sigma_2\sigma_3\dots  \in \Omega$ to denote a typical element of $\Omega,$ and we write $\sigma_k$ to denote the $k$th element of $\sigma \in \Omega.$ Then $(\Omega, d_{\Omega})$ is a compact metric space, where the metric $d_{\Omega}$ is defined by $d_{\Omega}(\sigma, \omega)= 0$ when $\sigma= \omega$ and $d_{\Omega}(\sigma, \omega)= 2^{-k}$ when $k$ is the least index for which $\sigma_k \ne \omega_k.$ We call $\Omega$ the code space associated with the IFS $\mathcal{F}.$\\

Let $\sigma \in \Omega $ and $x \in X.$ Then, using the contractivity of $\mathcal{F},$ it is straightforward to prove that $$ \phi_{\mathcal{F}} (\sigma) := \lim_{k \rightarrow \infty} f_{\sigma_1} \circ f_{\sigma_2} \circ \dots \circ f_{\sigma_k}(x)$$
exists, is independent of $x,$ and depends continuously on $\sigma.$ Furthermore, the convergence to the limit is uniform in $x,$ for $x$ in any compact subset of $X.$ Let $A_{\mathcal{F}}=\{\phi_{\mathcal{F}}(\sigma): \sigma \in \Omega \}.$ Then $A_{\mathcal{F}} \subset X$ is called the attractor of $\mathcal{F}.$ The continuous function $$ \phi: \Omega \rightarrow A_{\mathcal{F}}$$ is called the address function of $\mathcal{F}.$ We call $ \phi_{\mathcal{F}}^{-1}(\{x\})=\{\sigma \in \Omega:\phi_{\mathcal{F}}(\sigma)=x\} $ the set of addresses of the point $x \in A_{\mathcal{F}}.$ \\
We order the elements of $\Omega$ according to $$\sigma \prec \omega~ \text{if and only if}~ \sigma_k < \omega_k,$$ where $k$ is the least index for which $\sigma_k \ne \omega_k .$ We observe that all elements of $\Omega $ are less than or equal to $ \overline{N}= NNN\dots $ and greater than or equal to $ \overline{1}= 111\dots .$ Note that $\phi_{\mathcal{F}}^{-1} (\{x\})$ contains a unique largest element.
Let $\mathcal{F}$ be a hyperbolic IFS with attractor $A_{\mathcal{F}}$ and address function $\phi_{\mathcal{F}}: \Omega \rightarrow A_{\mathcal{F}}.$ Let $$ \tau_{\mathcal{F}}(x)=\max\{\sigma \in \Omega : \phi_{\mathcal{F}}(\sigma)=x\}$$
for all $x \in A_{\mathcal{F}}.$ Then $$ \Omega_{\mathcal{F}}:=\{ \tau_{\mathcal{F}}(x): x \in A_{\mathcal{F}}\} $$ is called the tops code space and $$ \tau_{\mathcal{F}}: A_{\mathcal{F}} \rightarrow \Omega_{\mathcal{F}}$$ is called the tops function corresponding to the IFS $\mathcal{F}.$ It can be seen that the tops function $ \tau_{\mathcal{F}}: A_{\mathcal{F}} \rightarrow \Omega_{\mathcal{F}}$ is one-one and onto.
For $x,y \in A_{\mathcal{F}},$ we define $w^x:=\tau_{\mathcal{F}}(x)$ and $w^y:=\tau_{\mathcal{F}}(y).$ Without loss of generality we assume $w^x \prec w^y.$ We define $\Omega_{[x,y]}= \{\sigma \in \Omega_{\mathcal{F}}: \tau_{\mathcal{F}}(x) \prec \sigma \prec \tau_{\mathcal{F}}(y)\} \cup \{w^x,w^y\}.$ 
\subsection{Sierpi\'nski gasket}
 The reader is encouraged to consult \cite{Strichartz, Kigami} for a detailed study on the Sierpi\'nski gasket.
First we recall a well-known construction of the Sierpi\'nski gasket by Iterated Function System. Let us consider three points $q_i~ (i=1,2,3),$ in $\mathbb{R}^2$, which are at equidistant from each other. Corresponding to each of these points, we define maps $u_i:\mathbb{R}^2 \to \mathbb{R}^2$ by $u_i(x)=(x+q_i)/2.$ The invariant set of IFS $\{u_i: i=1,2,3\}$, is called the Sierpi\'nski gasket ($SG$ for short), that is, $$SG= \cup_{i=1}^3 u_i(SG).$$
For $n \in \mathbb{N},$ we denote the collection of all words with length $n$ by $\{1,2,3\}^n,$ that is, if $w \in \{1,2,3\}^n$ then $w=w_1,w_2 \dots w_n$ where $w_i \in \{1,2,3\}.$ We define, for $w \in \{1,2,3\}^n$, $$u_w= u_{w_1} \circ u_{w_2} \circ \dots \circ u_{w_n}~~ \text{and} ~q_w= u_{w_1w_2 \dots w_{n-1}}(q_{w_n}).$$
Define $V_0=\{q_1,q_2,q_3\}.$ We call $V_0$ the set of vertices of $SG.$ For any positive integer $n,$ we define $V_n$ to be the union of all $u_w(V_0)$ with $w \in \{1,2,3\}^n.$ Define $V_*= \cup_{n=1}^{\infty}V_n.$
 We define $ \Gamma_0$ to be the complete graph on the vertex set $V_0.$ Having constructed graph $\Gamma_{m-1}$ with vertex set $V_{m-1}$ for some $m \ge 1,$ we define the graph $\Gamma_m$ on $V_m$ as follows: for any $x,y \in V_m,$ the edge relation $ x \sim_m y$ to hold if and only if $x=u_i(x'),y=u_i(y')$ with $x' \sim_{m-1} y'$ and $i \in \{1,2,3\}.$ Equivalently, $x \sim_m y$ if and only if there exists $\omega \in \{1,2,3\}^n$ such that $x,y \in u_{\omega}(V_0).$ For $m=0,1,2, \dots ,$ we define graph energies $E_m$ on $\Gamma_m$ by $$ E_m(f):= \Big(\frac{5}{3} \Big)^m \sum_{x \sim_m y}(f(x)-f(y))^2.$$
It is well known that the graph energy sequence $\{E_m\}$ defined as above satisfies $ E_{m-1}(f)= \min E_m(\tilde{f}),$ where the minimum is taken over all $\tilde{f}$ satisfy $ \tilde{f}|_{V_{m-1}}=f$ for any $f: V_* \rightarrow \mathbb{R}$ and for any $m \ge 1.$ By above, for each function $f$ on $V_*$, sequence $\{E_m(f)\}_{m=0}^{\infty}$ is increasing. We call $$ E(f):= \lim_{m \rightarrow \infty}E_m(f)$$ the energy of $f$ on $V_*.$ Furthermore, we say $f$ has finite energy if $E(f) < +\infty.$\\
It is well known that any function $f$ with $E(f) < +\infty$ is uniformly continuous on $V_*.$ Thus $f$ can be uniquely extended to a continuous function on $SG$ since $V_*$ is dense in $SG.$\\
We call $f$ a harmonic function on $SG$ if $E_{m-1}(f)=E_m(f)$ for all $m \ge 1.$ The following property is the well-known $``\frac{1}{5}-\frac{2}{5}"$ rule for harmonic functions.
\begin{lemma}
Let $h$ be a harmonic function on $SG.$ Let $(i,j,k)$ be a permutation of $(1,2,3)$. Then $$h(q_{ij})= \frac{2}{5}h(q_i)+\frac{2}{5}h(q_j)+\frac{1}{5}h(q_k).$$ Generally, for any $w=w_1\dots w_m \in \{1,2,3\}^m,$ we have $$h(q_{wij})= \frac{2}{5}h(q_{wi})+\frac{2}{5}h(q_{wj})+\frac{1}{5}h(q_{wk}),$$ where $wi, wj, wk $ and $wij$ to be the word $w_1\dots w_m i,~w_1\dots w_m j,~ w_1\dots w_m k $ and $w_1\dots w_m ij,$ respectively.
\end{lemma}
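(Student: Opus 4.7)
The plan is to establish the base case $m=0$ by invoking the variational characterization of harmonic functions, and then to pass to the general identity via self-similarity of the energy form.

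For the base case, harmonicity of $h$ gives $E_0(h)=E_1(h)$, so the compatibility property recalled just above forces the values $h(q_{12})$, $h(q_{13})$, $h(q_{23})$ to minimize $E_1$ subject to the prescribed boundary data $h(q_1)$, $h(q_2)$, $h(q_3)$. The graph $\Gamma_1$ carries nine edges (three per sub-triangle $u_i(V_0)$, using the identification $u_i(q_j)=u_j(q_i)$), and equating to zero the three partial derivatives of $E_1$ with respect to the unknown midpoint values yields a symmetric $3\times 3$ linear system. Summing its three equations collapses to $h(q_{12})+h(q_{13})+h(q_{23})=h(q_1)+h(q_2)+h(q_3)$; substituting back into one of the equations delivers $5\,h(q_{12})=2h(q_1)+2h(q_2)+h(q_3)$, and permuting the roles of the three corners handles the other two identities.

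For the general case, I would exploit the fact that every harmonic function $h$ on $SG$ pulls back to a harmonic function $h\circ u_w$ on $SG$ for any word $w\in\{1,2,3\}^m$. This follows by grouping the edges of $\Gamma_{m+1}$ according to the sub-triangle $u_w(V_0)$ containing them, which produces the identity
\[
E_{m+1}(h)-E_m(h) \;=\; \sum_{|w|=m} \bigl(E_1(h\circ u_w)-E_0(h\circ u_w)\bigr).
\]
Since each summand is non-negative and harmonicity forces the left-hand side to vanish, each $h\circ u_w$ satisfies $E_0(h\circ u_w)=E_k(h\circ u_w)$ at every level $k$; applying the base case to $h\circ u_w$ and translating indices via $u_w(q_i)=q_{wi}$ and $u_w(q_{ij})=q_{wij}$ gives the required formula.

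The main obstacle I anticipate is only notational: keeping the nine edge terms of $E_1$ in order and reading off the self-similar edge decomposition cleanly. Conceptually, the content is exhausted by the minimization at level one together with the scale invariance of $E$, and no deeper difficulty arises.
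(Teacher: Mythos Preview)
The paper does not prove this lemma; it is stated without proof as the well-known ``$\tfrac{1}{5}$--$\tfrac{2}{5}$'' rule, with an implicit reference to the standard sources (Kigami, Strichartz). Your argument is correct and is essentially the standard derivation found there: minimize $E_1$ with the boundary values on $V_0$ fixed, then propagate by self-similarity.

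Two minor remarks. First, the self-similar identity you display is off by the renormalization constant; the correct version is
\[
E_{m+1}(h)-E_m(h)\;=\;\Bigl(\tfrac{5}{3}\Bigr)^{m}\sum_{|w|=m}\bigl(E_1(h\circ u_w)-E_0(h\circ u_w)\bigr),
\]
but since harmonicity forces both sides to vanish, this has no effect on the argument. Second, for the lemma as stated you only need $E_0(h\circ u_w)=E_1(h\circ u_w)$ for each word $w$ of length $m$; the stronger claim that equality persists at every level $k$ is true but not needed here.
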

From the above lemma, we observe that a harmonic function $h$ is determined by its values on $V_0.$ We also obtain that $h$ has the following min-max property:$$ \min_{x \in V_0} h(x) \le h(y)\le \max_{x \in V_0}h(x), ~~\text{for any } y \in SG.$$
In particular, $h$ is constant on SG if it is constant on $V_0.$
\subsection{Fractal dimensions}
We shall summarize two notions of fractal dimension briefly here, but refer the reader to \cite{Fal}.
\begin{definition}
 For a non-empty subset $U \subset \mathbb{R}^n,$ the diameter of $U$ is defined as $$|U|=\sup\big\{\|x-y\|_2: x,y \in U\big\},$$ where $\|.\|_2$ denotes the Euclidean norm. Let $F$ be a subset of $\mathbb{R}^n$ and $s$ a non-negative real number, the $s$-dimensional Hausdorff measure of $F$ is defined as $$\mathcal{H}^s(F)=\lim_{\delta \rightarrow 0^+} \Big[ \inf \Big\{\sum_{i=1}^{\infty}|U_i|^s:  F \subseteq \cup U_i, ~~ |U_i|< \delta \Big\}\Big] .$$ 
\end{definition}
 \begin{definition}
 Let $F \subseteq \mathbb{R}^n$ and $s \ge 0.$ The Hausdorff dimension of $F$ is defined as $$ \dim_H(F)=\inf\{s:\mathcal{H}^s(F)=0\}=\sup\{s:\mathcal{H}^s(F)=\infty\}.$$
 \end{definition}

 \begin{definition} Let $F$ be a nonempty bounded subset of $\mathbb{R}^n$ and let $N_{\delta}(F)$ be the smallest number of sets of diameter at most $\delta$ which can cover $F.$ The lower box dimension and upper box dimension of $F$ respectively are defined as $$\underline{\dim}_B(F)=\varliminf_{\delta \rightarrow 0^+} \frac{\log N_{\delta}(F)}{- \log \delta},$$
 and
 $$\overline{\dim}_B(F)=\varlimsup_{\delta \rightarrow 0^+} \frac{\log N_{\delta}(F)}{- \log \delta}.$$
 If the above two are equal, we call the common value as the box dimension of $F,$ that is,  $$\dim_B(F)=\lim_{\delta \rightarrow 0^+} \frac{\log N_{\delta}(F)}{- \log \delta}.$$
 \end{definition}
  In the sequel, we shall use the following result of \cite{Fal}, which reveals a fundamental property of the Hausdorff dimension and box dimension.
  \begin{theorem} \label{BR1}
  Let $A \subseteq \mathbb{R}^n$ and $f:A \to \mathbb{R}^m$ a Lipschitz map. Then $\dim_{H}\big( f(A) \big) \le \dim_{H}(A)$ and $\dim_{B}\big( f(A) \big) \le \dim_{B}(A).$
  \end{theorem}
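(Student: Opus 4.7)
The plan is to exploit the Lipschitz hypothesis $|f(x)-f(y)|\le c|x-y|$ to transport covers of $A$ into covers of $f(A)$ of controlled diameter, and then feed this comparison into the two dimension definitions in turn. The key one-line observation, which I would establish first, is that if $\{U_i\}$ is any cover of $A$, then $\{f(A\cap U_i)\}$ is a cover of $f(A)$ with $|f(A\cap U_i)|\le c|U_i|$ for every $i$. This is the only geometric input needed; everything else is bookkeeping on the two definitions.

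For the Hausdorff bound, I would fix $s\ge 0$ and $\delta>0$, take any $\delta$-cover $\{U_i\}$ of $A$, and note that the induced $c\delta$-cover satisfies $\sum |f(A\cap U_i)|^s\le c^s\sum |U_i|^s$. Taking infima yields $\mathcal{H}^s_{c\delta}(f(A))\le c^s \mathcal{H}^s_{\delta}(A)$, and sending $\delta\to 0^+$ gives $\mathcal{H}^s(f(A))\le c^s\mathcal{H}^s(A)$. If $s>\dim_H(A)$, the right-hand side is zero, so $\dim_H(f(A))\le s$; taking the infimum over such $s$ closes this case.

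For the box dimension bound I would start directly from $N_{\delta}(A)$. If $\{U_1,\dots,U_N\}$ is a cover of $A$ by $N=N_{\delta}(A)$ sets of diameter at most $\delta$, then $\{f(A\cap U_i)\}$ covers $f(A)$ by sets of diameter at most $c\delta$, so $N_{c\delta}(f(A))\le N_{\delta}(A)$. Substituting $\varepsilon=c\delta$ gives
\[
\frac{\log N_{\varepsilon}(f(A))}{-\log\varepsilon}\;\le\;\frac{\log N_{\varepsilon/c}(A)}{-\log(\varepsilon/c)-\log c}.
\]
Since the additive constant $\log c$ is dwarfed by $-\log(\varepsilon/c)\to+\infty$ as $\varepsilon\to 0^+$, passing to $\limsup$ and $\liminf$ on both sides transfers the inequality to upper and lower box dimensions respectively, which gives the claimed bound for $\dim_B$.

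The argument is essentially routine, so I do not anticipate a serious obstacle; the only step requiring a little care is the logarithmic asymptotic for the box dimension case, where one must verify that the $\log c$ term is absorbed into the limit and hence that the Lipschitz constant does not appear in the final inequality. This is why a Lipschitz (and in fact bi-Lipschitz-invariant) notion of dimension emerges from a definition that at first glance involves precise diameters.
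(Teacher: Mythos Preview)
Your argument is correct and is essentially the standard proof found in Falconer's book. Note, however, that the paper does not supply its own proof of this theorem at all: it is quoted as a background result from \cite{Fal} and used without proof, so there is no in-paper argument to compare against. Your write-up would serve perfectly well as the omitted justification.
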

 
\section{Graph of function on the Sierpi\'nski gasket}
\begin{lemma}\label{BVSGl1}
 If $f :SG \rightarrow \mathbb{R}$ is continuous on $SG,$ then $\dim_H (G_f) \ge \frac{\log 3}{\log 2}.$
\end{lemma}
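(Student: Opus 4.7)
The plan is to invoke the Lipschitz dimension-monotonicity principle recorded in Theorem \ref{BR1} applied to the coordinate projection
$$\pi : G_f \to SG, \qquad \pi\bigl(x, f(x)\bigr) = x,$$
where $G_f \subset \mathbb{R}^2 \times \mathbb{R} = \mathbb{R}^3$. For any two points $(x, f(x)), (y, f(y)) \in G_f$ one has
$$\|\pi(x, f(x)) - \pi(y, f(y))\|_2 = \|x - y\|_2 \le \sqrt{\|x - y\|_2^2 + (f(x) - f(y))^2},$$
so $\pi$ is $1$-Lipschitz, and visibly $\pi(G_f) = SG$.

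Applying Theorem \ref{BR1} to $\pi$, we conclude
$$\dim_H(SG) = \dim_H\bigl(\pi(G_f)\bigr) \le \dim_H(G_f).$$
Combining this with the classical computation $\dim_H(SG) = \frac{\log 3}{\log 2}$ (obtained, for instance, by verifying the open set condition for the IFS $\{u_1, u_2, u_3\}$) immediately yields the claimed lower bound.

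The argument is essentially a one-liner and carries no real obstacle. The only point that requires care is the direction of the Lipschitz inequality: Theorem \ref{BR1} reads $\dim_H(f(A)) \le \dim_H(A)$, so to bound $\dim_H(G_f)$ from below we must use a Lipschitz map \emph{out of} $G_f$ rather than into it; this is why the projection, not a lift, is the right object. I note in passing that continuity of $f$ is not used in this step — it presumably appears in the hypothesis only to guarantee that $G_f$ is a well-behaved compact subset of $\mathbb{R}^3$ on which the various dimensions are unambiguous.
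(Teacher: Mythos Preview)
Your proof is correct and matches the paper's own argument exactly: both use the Lipschitz surjection $\pi:G_f\to SG$, $(t,f(t))\mapsto t$, together with Theorem~\ref{BR1} and the known value $\dim_H(SG)=\frac{\log 3}{\log 2}$. Your write-up is simply more detailed than the paper's one-line proof.
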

\begin{proof}
In view of Theorem \ref{BR1}, a Lipschitz onto map $\mathcal{T}_f : G_f \rightarrow SG $ defined by $\mathcal{T}_f((t,f(t)))=t$, produces the result.
\end{proof}

Maximum range of  $f$ over a part $u_w(SG)$ is defined by
$$ R_f[u_w(SG)]= \sup_{x,y \in u_w(SG)} |f(x)-f(y)|.$$
\begin{lemma}\label{BVSGl2}
Let $f :SG \rightarrow \mathbb{R}$ be continuous. Suppose that $ \delta = \frac{1}{2^n}$ for some $n \in \mathbb{N}.$ If $N_{\delta}(SG)$ denotes the number of $\delta-$cubes that intersect graph of $f,$ then $$ 2^n \sum_{w \in \{1,2,3\}^n} R_f[u_w(SG)] \le N_{\delta}(SG) \le 2.3^n + 2^n \sum_{w \in \{1,2,3\}^n} R_f[u_w(SG)].$$
\end{lemma}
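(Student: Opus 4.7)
The plan is to exploit the canonical decomposition of $SG$ into the $3^n$ level-$n$ sub-copies $\{u_w(SG) : w \in \{1,2,3\}^n\}$, each of diameter exactly $\delta = 2^{-n}$ (since every $u_i$ is a contraction by factor $\tfrac12$), and to count the $\delta$-cubes intersecting the portion of the graph lying over each piece.

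For the upper bound, I would cover the graph above a single piece $u_w(SG)$ by a vertical column of $\delta$-cubes whose horizontal footprint is placed to enclose that piece (possible because the piece has horizontal diameter $\delta$). The vertical span of $\{(x,f(x)) : x \in u_w(SG)\}$ is exactly $R_f[u_w(SG)]$, so this column uses at most $\lceil R_f[u_w(SG)]/\delta \rceil + 1 \le R_f[u_w(SG)]/\delta + 2$ cubes. Summing over the $3^n$ pieces yields
$$N_\delta(SG) \;\le\; \sum_{w \in \{1,2,3\}^n}\!\!\bigl(R_f[u_w(SG)]/\delta + 2\bigr) \;=\; 2^n\!\!\sum_{w \in \{1,2,3\}^n}\!\!R_f[u_w(SG)] \;+\; 2\cdot 3^n.$$

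For the lower bound, I would use that by continuity of $f$ and connectedness of $u_w(SG)$, the $z$-projection of the graph above $u_w(SG)$ is a closed interval of length $R_f[u_w(SG)]$. Covering this interval with intervals of length $\delta$ needs at least $R_f[u_w(SG)]/\delta$ of them, so the graph above $u_w(SG)$ must meet at least that many distinct horizontal $\delta$-slabs, and hence at least that many distinct $\delta$-cubes. Since the level-$n$ pieces have pairwise disjoint interiors and adjacent pieces meet only at single vertices of $V_n$, the per-piece cube contributions can be added (up to negligible sharing near the vertex set), which produces $2^n\sum_w R_f[u_w(SG)] \le N_\delta(SG)$.

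The main obstacle is the book-keeping for the lower bound: one must check carefully that a $\delta$-cube cannot be double-counted across two different level-$n$ pieces, exploiting the fact that adjacent pieces touch only at a vertex while each cube has side $\delta$ equal to the piece diameter. The upper bound is then a direct adaptation of the standard one-dimensional graph-covering argument; the only real content beyond the 1D case is verifying that the sub-gasket geometry is well-behaved enough for the lower estimate to sum without loss.
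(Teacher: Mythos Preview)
Your approach is essentially identical to the paper's: decompose $SG$ into the $3^n$ level-$n$ cells $u_w(SG)$ of diameter $\delta$, observe that the graph over each cell meets at least $R_f[u_w(SG)]/\delta$ and at most $2+R_f[u_w(SG)]/\delta$ of the $\delta$-cubes in the column above it, and sum. If anything, you are more careful than the paper, which simply asserts the per-cell counts and sums without addressing the double-counting issue you flag for the lower bound.
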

\begin{proof}
By using continuity of $f$, the number of cubes of side length $\delta$ in the part above $u_w(SG)$ that intersect the graph of $f$ is at least $\frac{R_f[u_w(SG)]}{ \delta} $ and at most  $2+ \frac{R_f[u_w(SG)]}{ \delta} .$ Now, summing over all such parts yields $ 2^n \sum_{w \in \{1,2,3\}^n} R_f[u_w(SG)] $ lower bound and $2.3^n+ 2^n \sum_{w \in \{1,2,3\}^n} R_f[u_w(SG)] $ upper bound of $N_{\delta}(SG).$

\end{proof}

\begin{theorem} \label{BVSGth1}
Let $f:SG \rightarrow \mathbb{R}$ be a continuous function.
\begin{enumerate}
\item Suppose $|f(x)-f(y)| \le c ~\|x-y\|^s,~~\forall ~~x, y \in SG,$ where $c>0$ and $0 \le s \le 1.$ Then $\dim_H (G_f) \le \overline{\dim}_B (G_f )\le 1-s+ \frac{\log3}{\log2}.$ The conclusion remains true if H\"{o}lder condition holds when $\|x-y\| < \delta $ for some $\delta >0.$
\item Suppose that there are numbers $c>0, \delta_0 >0$ and $0 \le s \le 1$ with the following property: for each $y \in SG$ and $ 0< \delta <\delta_0$ there exists $x$ such that $\|x-y\| \le \delta$ and $$ |f(x)-f(y)| \ge c \delta^s.$$ Then $ \underline{\dim}_B (G_f) \ge 1- s +\frac{\log3}{\log2}.$
\end{enumerate}

\end{theorem}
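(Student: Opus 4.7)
My plan is to feed Lemma~\ref{BVSGl2} at the dyadic scale $\delta=2^{-n}$ and, in each of the two parts, reduce the task to a pointwise estimate on the cell oscillations $R_f[u_w(SG)]$ over the $3^n$ words of length $n$; the logarithmic identity that will pin down the exponent in both cases is
\[
\frac{\log\bigl(3^n\cdot 2^{n(1-s)}\bigr)}{n\log 2}=1-s+\frac{\log 3}{\log 2}.
\]
Extension from dyadic $\delta$ to arbitrary $\delta$ comes for free from the sandwich $N_\delta\le N_{2^{-n-1}}(G_f)$ for $2^{-n-1}<\delta\le 2^{-n}$ together with $-\log\delta\asymp n\log 2$.

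For part (1), since $u_w$ is a similarity of ratio $2^{-n}$, the Euclidean diameter of $u_w(SG)$ is $2^{-n}\operatorname{diam}(SG)$, so the H\"older assumption instantly gives $R_f[u_w(SG)]\le c\operatorname{diam}(SG)^s\cdot 2^{-ns}$ uniformly in $w$. Summing over the $3^n$ words of length $n$ and inserting into the upper bound of Lemma~\ref{BVSGl2} yields $N_{2^{-n}}(G_f)\le 2\cdot 3^n+C\cdot 3^n\cdot 2^{n(1-s)}$, and dividing $\log$ by $n\log 2$ and passing to the limit superior gives $\overline{\dim}_B(G_f)\le 1-s+\frac{\log 3}{\log 2}$; the Hausdorff estimate then follows from the general inequality $\dim_H\le\overline{\dim}_B$. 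If the H\"older condition is only local for $\|x-y\|<\delta$, the same argument runs unchanged for $n$ so large that $2^{-n}\operatorname{diam}(SG)<\delta$.

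Part (2) is the delicate one, because the hypothesis produces a witness $x\in SG$ close to a given base point but with no a priori reason to lie in the cell $u_w(SG)$ containing that base point, while $R_f[u_w(SG)]$ requires both endpoints in that cell. My plan is to fix once and for all a base point $y_0\in SG$ at positive distance $d_0:=\operatorname{dist}\bigl(y_0,\bigcup_{i\ne j}u_i(SG)\cap u_j(SG)\bigr)>0$ and set $y_w:=u_w(y_0)\in u_w(SG)$. Using self-similarity of $SG$ and the fact that distinct level-$n$ cells meet only at finitely many junction points, all lying in $u_w(V_0)$ when one of them is $u_w(SG)$, I will produce a uniform constant $\rho_0>0$, independent of $n$ and $w$, with
\[
B(y_w,\rho_0\cdot 2^{-n})\cap SG\subseteq u_w(SG).
\]
Applying the hypothesis at $y=y_w$ with $\delta=\rho_0\cdot 2^{-n}<\delta_0$ then furnishes a witness $x\in u_w(SG)$ satisfying $|f(x)-f(y_w)|\ge c\rho_0^s\cdot 2^{-ns}$, so $R_f[u_w(SG)]\ge c\rho_0^s\cdot 2^{-ns}$. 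Summing over the $3^n$ words and inserting into the lower half of Lemma~\ref{BVSGl2} gives $N_{2^{-n}}(G_f)\ge c'\cdot 3^n\cdot 2^{n(1-s)}$, whence $\underline{\dim}_B(G_f)\ge 1-s+\frac{\log 3}{\log 2}$.

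The step I expect to be the main obstacle is the uniform choice of $\rho_0$: one must check that upon rescaling by $u_w$ no ``faraway'' piece of the gasket comes within distance $\rho_0\cdot 2^{-n}$ of $y_w$, which requires a careful geometric argument exploiting the restricted way in which level-$n$ cells of the gasket can touch, together with the bounded number of cells meeting at any given junction point.
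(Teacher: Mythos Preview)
Your plan is correct and follows exactly the paper's route: in both parts one bounds $R_f[u_w(SG)]$ by a constant times $2^{-ns}$ and feeds this into Lemma~\ref{BVSGl2}, then reads off the exponent from $\log(3^n\cdot 2^{n(1-s)})/(n\log 2)$.

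In part~(2) you are in fact more careful than the paper, which simply asserts $R_f[u_w(SG)]\ge c\,2^{-ns}$ without addressing why the witness $x$ falls in the same cell; your $\rho_0$-device closes that gap. For the geometric step you flag as the obstacle, the clean choice is to take $y_0\in SG$ lying in the \emph{open} outer triangle $T$ (such points exist, e.g.\ $u_1(q_{23})$) and set $\rho_0=\operatorname{dist}(y_0,\partial T)>0$ rather than the distance to the level-$1$ junction points: since distinct level-$n$ closed triangles $\overline{u_w(T)}$ meet in at most a vertex, one has $SG\cap\overline{u_w(T)}=u_w(SG)$, so every $z\in SG\setminus u_w(SG)$ lies outside $\overline{u_w(T)}$ and hence $\|z-y_w\|\ge\operatorname{dist}(y_w,\partial(u_w T))=2^{-n}\rho_0$.
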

\begin{proof}
\begin{enumerate}
\item
Since $f$ satisfies H\"{o}lder condition, we have $ R_f[u_w(SG)] \le  \frac{c}{2^{ns}}.$ From the previous lemma, we obtain $N_{\delta}(SG) \le 2.3^n + c2^{n(1-s)}3^n .$
Upper box-dimension of $G_f$ can be estimated in the following way $$ \overline{\lim}_{\delta \rightarrow 0} \frac{\log N_{\delta}(SG)}{-\log \delta} \le \lim_{n \rightarrow \infty}\frac{\log(2.3^n + c2^{n(1-s)}3^n)}{\log 2^n}.$$
Which produces $ \overline{\lim}_{\delta \rightarrow 0} \frac{\log N_{\delta}(SG)}{-\log \delta} \le 1-s +\frac{\log3}{\log2}.$ 
\item  Using hypothesis, we get $R_f[u_w(SG)] \ge c \delta ^s= \frac{c}{2^{ns}}.$ 
The previous lemma yields $ N_{\delta}(SG) \ge  c2^{n(1-s)}3^n .$ We estimate lower box dimension of $G_f$ in the similar manner and arrive at $ \underline{\dim}_B G_f \ge 1- s +\frac{\log3}{\log2}.$
\end{enumerate}

\end{proof}

\begin{theorem}[\cite{Fukushima}] \label{BVSGth2}
Let $f:V_* \rightarrow \mathbb{R}$ be a function. Then 
$$\sup_{x,y \in V_*} \frac{|f(x)-f(y)|}{\|x-y\|^{\sigma}} \le 9~ \sqrt{E(f)}$$ where $\sigma = \frac{\log (5/3)}{2 \log 2}.$
\end{theorem}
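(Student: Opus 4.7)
The plan is to leverage the self-similar scaling of the graph energies $E_m$. For any adjacent pair $x \sim_m y$ in $\Gamma_m$, the definition $E_m(f)=(5/3)^m\sum_{x'\sim_m y'}(f(x')-f(y'))^2$ together with monotonicity $E_m(f)\le E(f)$ yields the pointwise bound
$$|f(x)-f(y)|\le (3/5)^{m/2}\sqrt{E(f)}.$$
Normalizing so that the vertices of $V_0$ lie at unit mutual distance, one has $\|x-y\|\le 2^{-m}$ whenever $x,y\in u_w(V_0)$ for some word $w$ of length $m$, and a direct computation shows $(3/5)^{m/2}=2^{-m\sigma}$ with $\sigma=\log(5/3)/(2\log 2)$. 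Thus the desired Hölder-type inequality already holds with constant $1$ for neighboring vertices.

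For arbitrary $x,y\in V_*$, first choose $n$ with $2^{-(n+1)}<\|x-y\|\le 2^{-n}$, and pick $m\ge n$ large enough that $x,y\in V_m$. The geometric observation that drives the argument is that $x$ and $y$ must lie in at most two $n$-cells of $SG$, and any two distinct $n$-cells either share a single vertex or are disjoint. Consequently, either $x,y$ lie in a common $n$-cell, or they lie in two $n$-cells meeting at one vertex $v\in V_n$. Inside each such cell, I would build a finite chain $x=p_0,p_1,\dots,p_r=y$ of points in $V_m$ whose consecutive pairs are neighbors in $\Gamma_{m_i}$ for some $m_i\ge n$, obtained by recursively descending through the cell subdivisions until reaching the scale of $V_m$. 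The telescoping triangle inequality combined with the per-edge bound gives
$$|f(x)-f(y)|\le \sum_{i=1}^r |f(p_{i-1})-f(p_i)| \le \sqrt{E(f)}\sum_{i=1}^r (3/5)^{m_i/2}.$$

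The main obstacle is showing that this chain can always be constructed so that at each level $\ell\ge n$ only a bounded (in fact, universal) number of edges are required. This reduction allows the right-hand side to be dominated by a geometric series in $(3/5)^{1/2}$ whose leading term is of order $(3/5)^{n/2}$; combined with $(3/5)^{n/2}\le 2\|x-y\|^{\sigma}$ (which follows from $2^{-n}<2\|x-y\|$), this extracts an explicit multiplicative constant after careful bookkeeping of worst-case edge counts in the two geometric cases above. The specific constant $9$ emerges from optimizing the bounds on the per-level edge count, which is where the planar self-similarity of $SG$---each cell having exactly three boundary vertices and each pair of subcells meeting at exactly one vertex---enters crucially.
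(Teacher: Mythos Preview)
The paper does not prove this theorem at all: it is stated with attribution \texttt{[\cite{Fukushima}]} and no argument is given. There is therefore no ``paper's own proof'' against which to compare your proposal; the result is simply quoted from Fukushima--Shima.

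As for your outline on its own merits: the strategy is the standard one and is essentially how this inequality is established in the literature (e.g.\ Fukushima--Shima, or the resistance-metric arguments in Kigami's book). The per-edge bound $|f(x)-f(y)|\le(3/5)^{m/2}\sqrt{E(f)}$ for $x\sim_m y$ and the identification $(3/5)^{m/2}=2^{-m\sigma}$ are correct, and the idea of chaining through nested cells and summing a geometric series is the right one. However, your write-up is a plan rather than a proof: you explicitly flag the chain construction as ``the main obstacle'' and do not carry it out, and you assert that the constant $9$ ``emerges from optimizing the bounds on the per-level edge count'' without performing that optimization. To turn this into an actual proof you would need to (i) exhibit the chain with a uniform bound on the number of level-$\ell$ edges (typically at most two per level inside a single cell, doubled when two adjacent $n$-cells are involved), and (ii) sum the resulting geometric series $\sum_{\ell\ge n} c\,(3/5)^{\ell/2}$ and combine with $2^{-n}<2\|x-y\|$ to extract the explicit constant. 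Until those steps are written out, the argument is incomplete.
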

\begin{theorem}
If $f:SG \rightarrow \mathbb{R} $ is a continuous function and $E(f) < \infty$ then $$\frac{\log 3}{\log 2} \le \dim_H(G_f) \le \overline{\dim}_B(G_f) \le \frac{\log(108/5)}{2 \log 2}.$$
\end{theorem}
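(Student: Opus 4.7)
The plan is to combine the ingredients already assembled in the paper: Lemma \ref{BVSGl1} for the lower bound, the elementary fact $\dim_H \le \overline{\dim}_B$ for the middle inequality, and then bootstrap Theorem \ref{BVSGth2} into Theorem \ref{BVSGth1}(1) to get the upper bound.

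First, the lower bound $\frac{\log 3}{\log 2}\le \dim_H(G_f)$ is immediate from Lemma \ref{BVSGl1} since $f$ is continuous. The inequality $\dim_H(G_f)\le \overline{\dim}_B(G_f)$ is standard and needs no further argument.

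For the upper bound, I would observe that Theorem \ref{BVSGth2} says
\[
|f(x)-f(y)|\le 9\sqrt{E(f)}\,\|x-y\|^{\sigma}\qquad \text{for all } x,y\in V_*,
\]
with $\sigma=\frac{\log(5/3)}{2\log 2}$. Since $V_*$ is dense in $SG$ and $f$ is continuous on $SG$, this Hölder estimate extends to all $x,y\in SG$. Thus $f$ satisfies the hypothesis of Theorem \ref{BVSGth1}(1) with exponent $s=\sigma$ and constant $c=9\sqrt{E(f)}$, and we conclude
\[
\overline{\dim}_B(G_f)\le 1-\sigma+\frac{\log 3}{\log 2}.
\]

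The only remaining step is a routine algebraic simplification:
\[
1-\frac{\log(5/3)}{2\log 2}+\frac{\log 3}{\log 2}=\frac{2\log 2+2\log 3-\log(5/3)}{2\log 2}=\frac{\log(4\cdot 9\cdot 3/5)}{2\log 2}=\frac{\log(108/5)}{2\log 2},
\]
which gives the claimed bound. There is no real obstacle here; the substance of the argument sits entirely in Theorem \ref{BVSGth2} (Fukushima's sharp Hölder exponent for finite-energy functions on $SG$) and Theorem \ref{BVSGth1}(1), both of which are invoked as black boxes.
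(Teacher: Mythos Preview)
Your proof is correct and follows exactly the route the paper intends: Lemma \ref{BVSGl1} for the lower bound, the standard inequality $\dim_H\le\overline{\dim}_B$, and then Theorem \ref{BVSGth2} fed into Theorem \ref{BVSGth1}(1) (extending the H\"older bound from $V_*$ to $SG$ by density and continuity) for the upper bound, with the arithmetic carried out explicitly. The paper's own proof is a one-line reference to the same ingredients, so your version is simply a fleshed-out form of it.
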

\begin{proof}
Proof follows from Theorems \ref{BVSGl1} and \ref{BVSGth2}.
\end{proof}
The sapce of all biharmonic functions forms a vector space of dimension $6.$ For more details about biharmonic functions, the reader is referred to \cite{Strichartz3,Strichartz2}. Now, we will discuss about box dimension of biharmonic functions on the SG.
\begin{theorem}
Let $f$ be a biharmonic function on SG. Then $$\frac{\log 3}{\log 2} \le \dim_H(G_f) \le \overline{\dim}_B(G_f) \le \frac{\log(18 / 5)
}{\log 2}.$$
\end{theorem}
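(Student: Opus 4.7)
The lower bound $\frac{\log 3}{\log 2} \le \dim_H(G_f)$ follows immediately from Lemma \ref{BVSGl1}, since every biharmonic function is continuous, and the comparison $\dim_H(G_f) \le \overline{\dim}_B(G_f)$ is standard. The whole content of the theorem therefore concentrates in the upper bound $\overline{\dim}_B(G_f) \le \frac{\log(18/5)}{\log 2}$.

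My plan is to show that a biharmonic $f$ on $SG$ is H\"older continuous in the Euclidean metric with exponent $\alpha := \frac{\log(5/3)}{\log 2}$ and then to invoke Theorem \ref{BVSGth1}(1). The exponent is tuned precisely so that
$$1 - \alpha + \frac{\log 3}{\log 2} = \frac{\log 2 - \log(5/3) + \log 3}{\log 2} = \frac{\log(18/5)}{\log 2},$$
which is exactly the target bound; since $\alpha \in (0,1)$, the hypothesis of Theorem \ref{BVSGth1}(1) is satisfied as soon as the H\"older estimate is in place.

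To obtain the H\"older estimate, I would decompose $f = h + G u$ on $SG$, where $h$ is the harmonic function matching $f$ on $V_0$, $u := \Delta f$, and $G$ is the Green's operator on $SG$; because $f$ is biharmonic, $u$ is harmonic and hence bounded on $SG$. For $h$, the $1/5$--$2/5$ rule together with the min-max principle yields $R_h[u_w(SG)] \le (3/5)^{|w|} R_h[SG]$, which is exactly $\alpha$-H\"older continuity of $h$ since the Euclidean diameter of $u_w(SG)$ is $2^{-|w|}$. For the Green piece $Gu$, I would use the cell-scaling of the Green's kernel: on each level-$n$ cell $u_w(SG)$, the restriction $Gu|_{u_w(SG)}$ splits as the harmonic interpolant of its own boundary values plus a local Green's potential of $u$ whose sup norm is of order $5^{-n}\|u\|_\infty$. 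A scale-by-scale induction then propagates the $(3/5)^n$ decay from the harmonic interpolants downward, with the Green correction being subordinate.

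Once $R_f[u_w(SG)] \le C(3/5)^{|w|}$ has been established, the final step is direct: Lemma \ref{BVSGl2} with $\delta = 2^{-n}$, summed over the $3^n$ words of length $n$, gives $N_\delta(SG) \le 2 \cdot 3^n + C(18/5)^n$, and passing $n \to \infty$ delivers the stated upper bound. The main obstacle is the inductive propagation of the $(3/5)^n$ decay for $Gu$, because the boundary traces of $Gu$ on subcells are not zero and must be controlled from the previous scale, forcing a genuinely cross-scale argument. A cleaner alternative, which I would fall back on if the induction became unwieldy, is to exploit the invariance of the $6$-dimensional space of biharmonic functions under the pull-backs $f \mapsto f \circ u_i$ and to identify the joint spectral radius of these three linear maps, measured in the oscillation seminorm, as $3/5$—the same ratio as in the harmonic case—thereby obtaining the H\"older estimate directly.
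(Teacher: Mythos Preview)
Your proposal is correct in outline and reaches the same bound, but it proceeds differently from the paper. The paper works directly from the biharmonic recursion: it first derives the explicit midpoint formula
\[
f(q_{wij})=\tfrac{1}{5}f(q_{wk})+\tfrac{2}{5}f(q_{wi})+\tfrac{2}{5}f(q_{wj})+\tfrac{1}{3}5^{-m}\!\left(\tfrac{7}{25}\Delta f(q_{wk})+\tfrac{9}{25}\Delta f(q_{wi})+\tfrac{9}{25}\Delta f(q_{wj})\right),
\]
so that $f$ is ``harmonic plus an $O(5^{-m})$ correction'' at the level of vertex values; it then invokes \cite[Lemma~5.1]{Sahu} for the harmonic part to get a vertex-difference bound of size $(3/5)^{m+1}+O(5^{-m})$, and finishes by a direct box count, obtaining $N_\delta\le 3^{m+1}\bigl[(6/5)^{m+1}+o(1)\bigr]$ and hence the limit $\log(18/5)/\log 2$. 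Your route instead packages the same $(3/5)^{|w|}$ oscillation decay as the H\"older estimate with exponent $\alpha=\log(5/3)/\log 2$ and then feeds it into Theorem~\ref{BVSGth1}(1). What the paper's argument buys is concreteness: once the midpoint formula is written, one cites a single external lemma and computes. What yours buys is portability---the decomposition $f=h+Gu$ and the cross-scale recursion $A_{n+1}\le\tfrac{3}{5}A_n+C\,5^{-n}$ (or, cleaner, the observation that the pull-backs $f\mapsto f\circ u_i$ have spectral radius $3/5$ on biharmonic functions modulo constants) generalize immediately to polyharmonic functions and to other p.c.f.\ fractals, whereas the paper's explicit formula is specific to the biharmonic case on $SG$. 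The ``main obstacle'' you flag is real but routine: the recursion above solves to $A_n=O((3/5)^n)$ since $5^{-n}=o((3/5)^n)$, so the induction does close.
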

\begin{proof}
By definition of biharmonic function, we note that 
\begin{equation}\label{biharmonic}
\begin{aligned}
& \Delta f(q_{wij})=  \frac{2}{5} \Delta f(q_{wi}) + \frac{2}{5} \Delta f(q_{wj})+\frac{1}{5} \Delta f(q_{wk}),\\
& 4 f(q_{wij})- f(q_{wi})-f(q_{wj})-f(q_{wjk})-f(q_{wki})= \frac{2}{3} 5^{-m} \Delta f(q_{wij}),\\
 & 4 f(q_{wjk})-f(q_{wj})- f(q_{wk})-f(q_{wij})-f(q_{wki})= \frac{2}{3} 5^{-m} \Delta f(q_{wjk}),\\
 & 4 f(q_{wki})- f(q_{wk})-f(q_{wi})-f(q_{wij})-f(q_{wjk})= \frac{2}{3} 5^{-m} \Delta f(q_{wki}).
\end{aligned}
\end{equation} 
From the above equations, one deduces
\begin{equation*}
\begin{aligned}
 f(q_{wij})  = &\frac{f(q_{wk})+2f(q_{wi})+2f(q_{wj})}{5} \\ & +\frac{1}{3} 5^{-m} \Big(\frac{3}{5}\Delta f(q_{wij})+\frac{1}{5} \Delta f(q_{wki}) + \frac{1}{5} \Delta f(q_{wjk})\Big).
 \end{aligned}
 \end{equation*}
Using Equation \ref{biharmonic},
\begin{equation*}
\begin{aligned}
 f(q_{wij})= &  \frac{1}{5}f(q_{wk})+\frac{2}{5}f(q_{wi})+\frac{2}{5}f(q_{wj}) \\& +\frac{1}{3} 5^{-m} \Big(\frac{7}{25}\Delta f(q_{wk})+\frac{9}{25} \Delta f(q_{wi}) + \frac{9}{25} \Delta f(q_{wj})\Big).
\end{aligned}
\end{equation*}
 Similar expressions for $f(q_{wjk})$ and $f(q_{wki})$ can be obtained.
From \cite[Lemma $5.1$]{Sahu}, we get 
\begin{equation*}
\begin{aligned}
|f(q_{wk})- f(q_{wij})|  = &\Big| f(q_{wk})- \frac{1}{5}f(q_{wk})+\frac{2}{5}f(q_{wj})+\frac{2}{5}f(q_{wi})\\ & + \frac{1}{3} 5^{-m} \Big(\frac{7}{25}\Delta f(q_{wk})+\frac{9}{25} \Delta f(q_{wi}) + \frac{9}{25} \Delta f(q_{wj})\Big)\Big| \\  \le & \Big|f(q_{wk})- \frac{1}{5}f(q_{wk})+\frac{2}{5}f(q_{wi})+\frac{2}{5}f(q_{wj})\Big| + \frac{5^{-m}}{3}K \\  \le & \Big(\frac{6}{5}\Big)^{m+1}\Big(\frac{1}{2}\Big)^{m+1} + \frac{1}{3}\Big(\frac{1}{5}\Big)^{m+1}5K,
\end{aligned}
\end{equation*}
where $K$ is a suitable constant.
Therefore, number of triangular boxes required to cover is equal to $$  \Big[\Big(\frac{6}{5}\Big)^{m+1}\Big(\frac{1}{2}\Big)^{m+1} + \frac{1}{3}\Big(\frac{1}{5}\Big)^{m+1}5K \Big]2^{m+1}= \Big(\frac{6}{5}\Big)^{m+1} + \frac{1}{3}\Big(\frac{2}{5}\Big)^{m+1}5K.$$
Now, 
\begin{equation*}
\begin{aligned}
\overline{\dim}_B(G_f) & = \lim_{\delta \to 0^+} \frac{\log N_{\delta}(G_f)}{- \log \delta} \\ & \le \lim_{m \to \infty} \frac{\log 3^{m+1}\Big[\Big(\frac{6}{5}\Big)^{m+1} + \frac{1}{3}\Big(\frac{2}{5}\Big)^{m+1}5K\Big]}{ (m+1)\log 2} \\ & = \lim_{m \to \infty} \frac{(m+1)\log 3}{(m+1) \log 2} + \lim_{m \to \infty}  \frac{\log\Big[\Big(\frac{6}{5}\Big)^{m+1} + \frac{1}{3}\Big(\frac{2}{5}\Big)^{m+1}5K\Big]}{ (m+1)\log 2} \\ & = \frac{\log 3}{\log 2} + \lim_{m \to \infty} \frac{\log \Big(\frac{6}{5}\Big)^{m+1}}{ (m+1)\log 2} +\lim_{m \to \infty}   \frac{\log\Big[1 + \frac{5K}{3}\Big(\frac{1}{3}\Big)^{m+1}\Big]}{ (m+1)\log 2} \\ & = \frac{\log(18/5)}{\log 2}.
\end{aligned}
\end{equation*}
This completes the proof.
\end{proof}
For $f:SG \rightarrow \mathbb{R}$, we define total oscillation of order $n$ by $$ R(n,f)= \sum_{w \in \{1,2,3\}^n} R_f[u_w(SG)].$$
We construct a new class of functions (see; for instance, \cite{ADBJ}) in the following way:
 $$ \mathcal{C}^{\alpha}(SG) := \{f:SG \rightarrow \mathbb{R}:~ \text{f ~is~ measurable ~and}~ \|f\|_{\mathcal{C}^{\alpha}} < \infty\} $$
 where $ 0 \le \alpha \le 1 $ and $\|f\|_{\mathcal{C}^{\alpha}}:= \sup_{n \in \mathbb{N}} \frac{R(n,f)}{2^{n(\frac{\log3}{\log2}-\alpha)}}.$
 The proof of the upcoming theorem follows on lines similar to \cite[Theorem $3.1$]{ADBJ}. However, we include the proof for reader's convenience.
\begin{theorem}\label{osc_dim_SG}
Let $f:SG \rightarrow \mathbb{R}$ be a continuous function and let $0< \gamma < 1.$ Then $\overline{\dim}_B(G_f)= 1-\gamma +\frac{\log3}{\log2} $ if and only if $f \in \Big(\cap_{\alpha < \gamma} \mathcal{C}^{\alpha}(SG)\Big)\backslash \Big(\cup_{\beta > \gamma} \mathcal{C}^{\beta}(SG)\Big).$ 
\end{theorem}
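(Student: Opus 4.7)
The plan is to establish a tight two-sided dictionary between the growth rate of $R(n,f)$ and the value $\overline{\dim}_B(G_f)$ via Lemma \ref{BVSGl2}, and then deduce the "iff" by letting $\alpha \nearrow \gamma$ and $\beta \searrow \gamma$. Throughout I will compute $\overline{\dim}_B(G_f)$ along the geometric sequence $\delta = 2^{-n}$; since $2^{-n-1}/2^{-n}$ is bounded away from $0$, this gives the same limsup as the continuous definition.

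First I would prove an upper-bound lemma: if $f \in \mathcal{C}^{\alpha}(SG)$ then $\overline{\dim}_B(G_f) \le 1-\alpha + \tfrac{\log 3}{\log 2}$. Indeed, the definition of the norm gives $R(n,f) \le \|f\|_{\mathcal{C}^{\alpha}} \, 2^{n(\frac{\log 3}{\log 2}-\alpha)}$, and substituting into the upper estimate of Lemma \ref{BVSGl2} yields
\begin{equation*}
N_{2^{-n}}(G_f) \le 2 \cdot 3^{n} + \|f\|_{\mathcal{C}^{\alpha}}\, 2^{n(1+\frac{\log 3}{\log 2}-\alpha)}.
\end{equation*}
Since $\alpha \le 1$, the second term dominates (or matches) the first, and dividing $\log N_{2^{-n}}(G_f)$ by $n \log 2$ and passing to the limsup produces the claimed upper bound.

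Next I would prove a matching lower-bound lemma: if $f \notin \mathcal{C}^{\beta}(SG)$ then $\overline{\dim}_B(G_f) \ge 1-\beta+\tfrac{\log 3}{\log 2}$. The assumption is that the supremum $\sup_n R(n,f) / 2^{n(\frac{\log 3}{\log 2}-\beta)}$ is infinite. Since $f$ is continuous on the compact set $SG$, we have $R(n,f) \le 2\|f\|_\infty \cdot 3^{n}$, so for each fixed $N$ the maximum of the ratio over $n \le N$ is finite; hence the sup is infinite only if there is a subsequence $n_k \to \infty$ along which the ratio tends to infinity. Applying the lower estimate of Lemma \ref{BVSGl2} to this subsequence gives $N_{2^{-n_k}}(G_f) \ge 2^{n_k} R(n_k,f)$, and taking $\log/(n_k \log 2)$ yields the desired lower bound on $\overline{\dim}_B(G_f)$.

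Finally, the "iff" follows by combining these two monotone families of bounds. If $f \in \bigcap_{\alpha<\gamma}\mathcal{C}^{\alpha}(SG) \setminus \bigcup_{\beta>\gamma}\mathcal{C}^{\beta}(SG)$, the upper-bound lemma applied for every $\alpha < \gamma$ and the lower-bound lemma applied for every $\beta > \gamma$, together with letting $\alpha \nearrow \gamma$ and $\beta \searrow \gamma$, pin $\overline{\dim}_B(G_f)$ to exactly $1-\gamma+\tfrac{\log 3}{\log 2}$. Conversely, if $\overline{\dim}_B(G_f) = 1-\gamma+\tfrac{\log 3}{\log 2}$, then membership in some $\mathcal{C}^{\beta_0}(SG)$ with $\beta_0 > \gamma$ would force the dimension strictly below this value, while failure of membership in some $\mathcal{C}^{\alpha_0}(SG)$ with $\alpha_0 < \gamma$ would force it strictly above, giving the required membership/non-membership. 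The main delicate point is the subsequence extraction in the lower-bound lemma, where continuity of $f$ (ruling out "bad" unboundedness concentrated on finitely many $n$) is what makes the argument go through.
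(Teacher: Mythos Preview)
Your proposal is correct and follows essentially the same route as the paper. The paper condenses your two lemmas into the single identity $\overline{\dim}_B(G_f)=1+\varlimsup_{n\to\infty}\frac{\log R(n,f)}{n\log 2}$ (which is exactly what Lemma~\ref{BVSGl2} yields) and then argues the two implications directly from that limsup; your version unpacks the same content into separate upper- and lower-bound lemmas, with the same use of boundedness of $f$ to extract the subsequence $n_k\to\infty$.
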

\begin{proof}
We start with $\overline{\dim}_B(G_f) = 1-\gamma +\frac{\log3}{\log2}.$  Since $\overline{\dim}_B(G_f)= 1+ \overline{ \lim}_{n \to \infty}\frac{\log R(n,f)}{n \log 2}$, for each $\epsilon >0$ we have the following: 
\begin{itemize}
\item[(1)]there exists $n_0 \in \mathbb{N}$ such that $R(n,f) \le 2^{n(\frac{\log 3}{\log 2}- \gamma+\epsilon)}$ for every $n > n_0$,
\item[(2)] a sequence $(n_k)$ with $n_k \to \infty$ and $ R(n,f) \ge 2^{n_k(\frac{\log 3}{\log 2} - \gamma - \epsilon)}.$ 
\end{itemize}
Using the boundedness of $f$ and $(1)$, we obtain $R(n,f) \le K 2^{n(\frac{\log 3}{\log 2}- \gamma+\epsilon)},~\forall ~~n \in \mathbb{N},$ where $K$ is chosen sufficiently large constant and depending on $f$. This in turn yields $f  \in \cap_{\alpha < \gamma} \mathcal{C}^{\alpha}(SG).$ Now, $(2)$ produces $ f \notin \cup_{\beta > \gamma} \mathcal{C}^{\beta}(SG).$ 
\par
To obtain the other side, we consider $f  \in \cap_{\alpha < \gamma} \mathcal{C}^{\alpha}(SG).$ That is, for each $\epsilon >0$, we have $f \in \mathcal{C}^{\gamma -\epsilon}(SG).$ More precisely, $R(n,f) \le K 2^{n(\frac{\log 3}{\log 2}- \gamma+\epsilon)}$ for every $n \in \mathbb{N}$ and for some constant $K>0.$ From the very definition of the upper box dimension, it is simple to see that $\overline{\dim}_B(G_f) \le  1-\gamma +\frac{\log3}{\log2}.$ 
Now we turn our focus on the other part, that is, $f  \notin \cup_{\beta > \gamma} \mathcal{C}^{\beta}(SG).$ In other words, for each $\epsilon >0$, $f \notin \mathcal{C}^{\gamma +\epsilon}(SG).$ Moreover, there exists a sequence $(n_k)$ depending on $\epsilon$ such that $R(n,f) \ge 2^k 2^{n_k(\frac{\log 3}{\log 2} - \gamma - \epsilon)}.$ The boundedness of $f$ provides a subsequence $( n_{k_m} )$ of $(n_k)$ such that $( n_{k_m}) \to \infty$ as $m \to \infty.$ This gives $\overline{\dim}_B(G_f) \ge 1-\gamma -\epsilon+\frac{\log3}{\log2}.$ Since $\epsilon>0$ was arbitrary real number, we therefore have $\overline{\dim}_B(G_f) \ge 1-\gamma+ \frac{\log3}{\log2}.$
\end{proof}
\begin{remark}
We have the following with respect to the end points
\[ 
\overline{\dim}_B(G_f)= \frac{\log 3}{\log 2} \iff f \in \cap_{0 < \epsilon <1} \mathcal{C}^{1 - \epsilon}(SG)
\]
and
\[ 
\overline{\dim}_B(G_f)= 1+ \frac{\log 3}{\log 2} \iff f \notin \cup_{0 < \epsilon<1} \mathcal{C}^{ \epsilon}(SG).
\]
\end{remark}
\begin{remark}
Theorem \ref{osc_dim_SG} strengthens Theorem \ref{BVSGth1}.
\end{remark}
\section{Bounded Variation}
\begin{definition}{(A)}
Let $ f: SG \rightarrow \mathbb{R}$ be a function. If a function $f$ satisfies $$\sup_{n \in \mathbb{N}} \sum_{w \in \{1,2,3\}^n} R_f[u_w(SG)] < \infty$$ then we say $f$ is of bounded variation on $SG.$ In this case, the total variation of $f$ is denoted by $V(f)= \sup_{n \in \mathbb{N}} \sum_{w \in \{1,2,3\}^n} R_f[u_w(SG)].$
\end{definition}
\begin{definition}{(B)}
Let $ f: SG \rightarrow \mathbb{R}$ be a function. If function $f$ satisfies $$\sup_{n \in \mathbb{N}} \sum_{w \in \{1,2,3\}^n} R_f[u_w(V_0)] < \infty$$ then we say $f$ is of bounded variation.
\end{definition}
\begin{example}\label{comp_exam}
The function $f :SG \to \mathbb{R}$ defined by $f(x_1,x_2)=x_1$ is a Lipschitz continuous but it is not of bounded variation with respect to the definition $(A)$ because $$ \sum_{w \in \{1,2,3\}^n} R_f[u_w(SG)] = \sum_{w \in \{1,2,3\}^n} \frac{1}{2^n}=\bigg( \frac{3}{2}\bigg)^n  \to \infty~~ \text{as}~ n \to \infty.$$
\end{example}
\begin{example}
Consider the function $f: SG \rightarrow \mathbb{R}$ defined by
 \begin{equation*}
 f(x) =
 \begin{cases}
 0 \quad \text{if} \quad x \in V_*\\
 1 \quad \text{otherwise}.
 \end{cases}
 \end{equation*}
 It is straightforward to check that the function $f$ is of bounded variation with respect to definition $(B)$ but is not of bounded variation with respect to definition $(A).$
\end{example}
\begin{example}
Consider the function $f: SG \rightarrow \mathbb{R}$ defined by
 \begin{equation*}
 f(x) =
 \begin{cases}
 1 \quad \text{if} \quad x=q_1\\
 0 \quad \text{otherwise}.
 \end{cases}
 \end{equation*}
 We see that the function $f$ is of bounded variation with respect to both definitions.
\end{example}

\begin{theorem}
The above two definitions are equivalent for the set of all continuous functions on $SG.$
\end{theorem}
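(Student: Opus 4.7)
The plan is to prove the equivalence by establishing both inequalities between the suprema in (A) and (B). Writing $V_A(f):=\sup_n \sum_{w \in \{1,2,3\}^n} R_f[u_w(SG)]$ and $V_B(f):=\sup_n \sum_{w \in \{1,2,3\}^n} R_f[u_w(V_0)]$, the direction $V_B(f) \le V_A(f)$ is immediate from $u_w(V_0) \subseteq u_w(SG)$ (hence $R_f[u_w(V_0)] \le R_f[u_w(SG)]$ term-by-term) and does not use continuity.

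For the reverse direction $V_A(f) \le V_B(f)$, which is where continuity enters, I would first record the subadditivity
\[
R_f[u_w(V_0)] \le \sum_{i=1}^{3} R_f[u_{wi}(V_0)],
\]
obtained by linking $u_w(q_i)$ and $u_w(q_j)$ through the junction $u_w(q_{ij}) \in u_{wi}(V_0) \cap u_{wj}(V_0)$ and applying the triangle inequality. Iterating, the partial sums $m \mapsto \sum_{v \in \{1,2,3\}^m} R_f[u_{wv}(V_0)]$ are nondecreasing in $m$, a fact I will need for the final interchange of sup and sum.

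The engine of the argument is the inductive estimate
\[
\sup_{x,y \in u_w(V_m)} |f(x)-f(y)| \;\le\; \sum_{v \in \{1,2,3\}^m} R_f[u_{wv}(V_0)],
\]
proved by induction on $m$. The base case $m=0$ is the definition of $R_f[u_w(V_0)]$. The inductive step uses $u_w(V_m)=\bigcup_{i=1}^{3} u_{wi}(V_{m-1})$ together with the single-point intersection $u_{wi}(V_{m-1}) \cap u_{wj}(V_{m-1}) = \{u_w(q_{ij})\}$ for $i \ne j$: given $x \in u_{wi}(V_{m-1})$ and $y \in u_{wj}(V_{m-1})$, I split $|f(x)-f(y)|$ at the junction $u_w(q_{ij})$ and apply the inductive hypothesis inside each of the two cells. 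Continuity of $f$ together with density of $\bigcup_m u_w(V_m)$ in $u_w(SG)$ then upgrade this vertex-level bound to $R_f[u_w(SG)] \le \sup_m \sum_{v \in \{1,2,3\}^m} R_f[u_{wv}(V_0)]$. Summing over $w \in \{1,2,3\}^n$ and exchanging the finite outer sum with $\sup_m$ (permitted by the monotonicity recorded above), the right side collapses to $\sup_m \sum_{u \in \{1,2,3\}^{n+m}} R_f[u_u(V_0)] \le V_B(f)$, and taking $\sup_n$ on the left yields $V_A(f) \le V_B(f)$.

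The main obstacle I anticipate is the inductive estimate: its validity rests crucially on the fact that two adjacent $m$-cells in $SG$ meet at exactly one vertex, a combinatorial feature that must be exploited carefully so that each cell contributes only once on the right-hand side and the bound does not blow up with $m$. The same strategy would fail on fractals with larger cell overlaps, which is why the single-junction structure of $SG$ must do genuine work here.
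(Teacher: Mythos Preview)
Your proof is correct and, in fact, considerably more careful than the paper's own argument, which consists of the single sentence ``Since $V_*$ is dense in $SG$, we have the required result.'' Both rest on the same idea---density of $V_*$ in $SG$ together with continuity of $f$---but the paper leaves entirely unstated the inductive estimate $\sup_{x,y\in u_w(V_m)}|f(x)-f(y)|\le \sum_{v\in\{1,2,3\}^m}R_f[u_{wv}(V_0)]$ and the monotone interchange of $\sup_m$ with the finite outer sum that you supply; these are exactly the steps needed to pass from oscillation over $u_w(V_0)$ at a \emph{fixed} level to oscillation over the full cell $u_w(SG)$. Your explicit use of the single-junction property $u_{wi}(V_{m-1})\cap u_{wj}(V_{m-1})=\{u_w(q_{ij})\}$ to avoid overcounting is the real content behind the density claim, and makes rigorous what the paper only gestures at.
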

\begin{proof}
Since $V_*$ is dense in SG, we have the required result.

\end{proof}

\begin{definition}{(C)}
Let $ f: SG \rightarrow \mathbb{R}$ be a function. For each partition $P: \overline{1}=w^0 \prec w^1 \prec w^2 \prec \dots \prec w^n= \overline{3}$ of $\Omega_{\mathcal{F}} ,$ we define variation of $f$ over SG as $$V(f,SG,P)= \sum_{i=1}^{n} \Big|f\big(\phi_{\mathcal{F}}(\omega^i)\big)-f\big(\phi_{\mathcal{F}}(\omega^{i-1})\big)\Big|.$$ Let us define total variation of $f$ over SG as $$V(f,SG):= \sup_{P} V(f,SG,P),$$ where the supremum is taken over all partitions $P$ of the tops code space $\Omega_{\mathcal{F}}.$ If $V(f,SG) < \infty,$ we say that $f$ is of bounded variation. The set of all functions of bounded variation on $SG$ will be denoted by $\mathcal{BV}(SG)$.
\end{definition}

\begin{remark}
Note that the space $\mathcal{BV}(SG)$ is a Banach space with respect to the norm $\|f\|:= |f(q_1)| + V(f,SG),$ where $q_1= \phi_{\mathcal{F}}(\overline{1}).$ 
\end{remark}
The next theorem shows that definition $(A)$ and definition $(C)$ are equivalent. The proof of the next theorem follows from the very construction of the Sierpi\'nski gasket, and from definitions $(A)$ and $(C)$ of bounded variation, hence omitted.
\begin{theorem}
The definitions $(A)$ and $(C)$ given above are equivalent.
\end{theorem}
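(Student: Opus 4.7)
The plan is to exploit the natural correspondence between level-$n$ words $w \in \{1,2,3\}^n$ and the blocks $I_w := \{\omega \in \Omega_{\mathcal{F}} : \omega_1 \dots \omega_n = w\}$ into which the order $\prec$ partitions the tops code space; under $\phi_{\mathcal{F}}$ each $I_w$ maps onto $u_w(SG)$ up to the countable set of junction corners shared with lex-later level-$n$ cells, and distinct blocks are themselves contiguous segments of $\Omega_{\mathcal{F}}$. This makes any partition in the sense of $(C)$ decompose into sub-partitions of the blocks $I_w$, and any choice of test points in the interior of a cell produces tops addresses that land in the correct block.

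For the direction $(C) \Rightarrow (A)$, I fix $n$ and $\epsilon > 0$ and, for each $w \in \{1,2,3\}^n$, choose $x_w, y_w$ in the interior of $u_w(SG)$ with $|f(x_w) - f(y_w)| > R_f[u_w(SG)] - \epsilon/3^n$; the interior choice forces their tops addresses into $I_w$. Sorting the $2 \cdot 3^n + 2$ tops addresses (including $\overline{1}$ and $\overline{3}$) via $\prec$ yields a partition $P$ in which $\{\tau_{\mathcal{F}}(x_w), \tau_{\mathcal{F}}(y_w)\}$ is consecutive within the restriction of $P$ to $I_w$, so the term $|f(x_w) - f(y_w)|$ appears as a summand of $V(f, SG, P)$. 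Summing over $w$ yields $V(f, SG, P) \ge \sum_{|w|=n} R_f[u_w(SG)] - \epsilon$, and letting $\epsilon \to 0$ and taking the supremum over $n$ gives one inequality.

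For the converse $(A) \Rightarrow (C)$, given a partition $P$ I assign to each consecutive pair its longest common prefix $w(i)$, so that both $\phi_{\mathcal{F}}(\omega^{i-1})$ and $\phi_{\mathcal{F}}(\omega^i)$ lie in $u_{w(i)}(SG)$ and the corresponding term is at most $R_f[u_{w(i)}(SG)]$. Two bookkeeping facts then package the estimate: at most two indices $i$ share the same prefix $w$ (in an increasing sequence the $(|w|+1)$-th digit can jump only from $1$ to $2$ and from $2$ to $3$), and sub-cells of $u_w(SG)$ overlap pairwise at junction corners, giving the subadditivity $R_f[u_w(SG)] \le \sum_{j=1}^{3} R_f[u_{wj}(SG)]$; iterating this pushes all the $w(i)$'s down to a common level $m$, bounding $V(f, SG, P)$ by a multiple of $\sum_{|w|=m} R_f[u_w(SG)]$ and hence by the quantity in $(A)$.

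The main obstacle I expect is the first step of $(C) \Rightarrow (A)$: the supremum defining $R_f[u_w(SG)]$ could in principle be realised at a junction corner whose tops address lies outside $I_w$, and a naive choice of corner addresses could interleave across blocks and destroy the per-cell contribution. This is circumvented by approaching the supremum with interior points of $u_w(SG)$ (which are dense) with an $\epsilon/3^n$ tolerance, using that the blocks $I_w$ are contiguous segments of $\Omega_{\mathcal{F}}$ under $\prec$ so the resulting interior addresses never cross block boundaries.
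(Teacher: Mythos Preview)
The paper itself omits this proof entirely, so there is no authorial argument to compare against; I am therefore evaluating your sketch on its own merits.

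Your $(C)\Rightarrow(A)$ direction is essentially sound for continuous $f$, but your stated work-around for the corner issue does not survive dropping continuity: density of interior points in $u_w(SG)$ only lets you approximate $R_f[u_w(SG)]$ by interior values if $f$ is continuous.  For the example $f(q_1)=1$, $f\equiv 0$ elsewhere (which the paper explicitly lists as BV in both senses), one has $R_f[u_{1^n}(SG)]=1$ while any two interior points give difference $0$, so your $\epsilon/3^n$ approximation fails outright.  A corner whose tops address lies in an adjacent block must be handled directly, not avoided.

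The more serious gap is in $(A)\Rightarrow(C)$.  Your two bookkeeping facts are correct: each word $w$ occurs as some $w(i)$ at most twice, and $R_f[u_w(SG)]\le\sum_{j}R_f[u_{wj}(SG)]$.  But ``pushing down'' to a common level $m=\max_i|w(i)|$ does \emph{not} give a universal multiple.  When you replace each $R_f[u_{w(i)}(SG)]$ by the sum over its level-$m$ descendants, a level-$m$ word $v$ is counted once for every $i$ with $w(i)$ a prefix of $v$; since $v$ has $m+1$ prefixes and each may be some $w(i)$ (twice), the overcount is $2(m+1)$.  Thus your argument yields only
\[
V(f,SG,P)\;\le\;2(m+1)\sum_{|w|=m}R_f[u_w(SG)]\;\le\;2(m+1)\,V(f),
\]
and $m$ grows with the partition.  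Concretely, take $f$ with $R_f[u_w(SG)]=3^{-|w|}$ (so the quantity in $(A)$ equals $1$) and let $P_K$ consist of $\{w\overline{1}:|w|=K\}\cup\{\overline{3}\}$; the LCPs $w(i)$ then run over every internal node of the depth-$K$ ternary tree twice, giving $\sum_i R_f[u_{w(i)}(SG)]=2K$.  Your upper bound is therefore genuinely unbounded in $P$, even though $V(f,SG,P_K)$ itself stays equal to $1$ for the natural increasing $f$ realising these oscillations.  To close the implication you need a chaining through junction points (inserting, for a fixed large $n$, the block endpoints $w\overline{1}$ into $P$ and controlling the cross-block jumps), not merely the crude bound $|f(\phi(\omega^i))-f(\phi(\omega^{i-1}))|\le R_f[u_{w(i)}(SG)]$.
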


\begin{remark}
In the light of Example \ref{comp_exam}, we may conjecture that every non-constant Lipschitz function is not of bounded variation with respect to $(A)$. Therefore, we need to introduce some other definitions of bounded variation on SG.
\end{remark}
 To include Lipschitz class in bounded variation class, we define another set of definitions of bounded variation on SG.
\begin{definition}{($A^*$)}
Let $ f: SG \rightarrow \mathbb{R}$ and $s=\frac{\log 3}{\log 2}.$ If function $f$ satisfies $$\sup_{n \in \mathbb{N}} \sum_{w \in \{1,2,3\}^n} \big(R_f[u_w(SG)]\big)^s < \infty$$ then we say $f$ is of bounded variation on $SG.$ In this case, the total variation of $f$ is denoted by $V^*(f)= \sup_{n \in \mathbb{N}} \sum_{w \in \{1,2,3\}^n} \big(R_f[u_w(SG)]\big)^s.$ 
\end{definition}
Similar to the above, $(B^*)$ and $(C^*)$ can be defined corresponding to $(B)$ and $(C)$ respectively.

\begin{note}\label{conj}
Every Lipschitz function is of bounded variation in the sense of $(A^*)$ on SG. Let $f:SG \to \mathbb{R}$ such that $|f(x)-f(y)| \le K\|x-y\|_2.$ Then  $$ \sum_{w \in \{1,2,3\}^n} R_f[u_w(SG)] \le \sum_{w \in \{1,2,3\}^n} \bigg( \frac{K}{2^n} \bigg)^s=\sum_{w \in \{1,2,3\}^n} \frac{K^s}{3^n} = K^s$$
holds for every $n $. Hence, $f$ is of bounded variation on SG.
This can be compared with \cite[Conjecture $5.3$]{AT}.
\end{note}
\begin{remark}
We could prove that every non-constant harmonic function $h$ is not of bounded variation with respect to the above definitions. Furthermore, we have $0< E(h) < \infty.$ Authors of \cite{AT} have obtained a similar result as above, for details, see \cite[Theorem $5.2$]{AT}. 
\end{remark}
The next theorem tells that the class of bounded variation functions in the sense of $(A^*)$ is larger than that of $(A).$
\begin{theorem}
Let $f:SG \to \mathbb{R}$. If $f$ is of bounded variation in the sense of $(A)$ then it is also bounded variation in the sense of $(A^*).$  
\end{theorem}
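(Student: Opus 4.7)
The plan is to exploit the fact that $s=\log 3/\log 2>1$ together with the trivial observation that each individual oscillation $R_f[u_w(SG)]$ is dominated by the full sum, hence by the total variation $V(f)$ coming from the hypothesis. Raising to a power $s>1$ shrinks numbers in $[0,1]$ but inflates numbers greater than $1$; the key is that I can bound the power by a linear expression once I control the individual terms.

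First I would observe that if $f$ is of bounded variation in the sense of $(A)$, then for every word $w\in\{1,2,3\}^n$ one has
\[
R_f[u_w(SG)] \;\le\; \sum_{w'\in\{1,2,3\}^n} R_f[u_{w'}(SG)] \;\le\; V(f).
\]
In particular, setting $M:=V(f)<\infty$, each oscillation satisfies $0\le R_f[u_w(SG)]\le M$.

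Next, since $s>1$, I would apply the elementary inequality $a^s = a\cdot a^{s-1}\le M^{s-1}\,a$ valid for all $0\le a\le M$. Summing this over $w\in\{1,2,3\}^n$ gives
\[
\sum_{w\in\{1,2,3\}^n}\bigl(R_f[u_w(SG)]\bigr)^s \;\le\; M^{s-1}\sum_{w\in\{1,2,3\}^n} R_f[u_w(SG)] \;\le\; M^{s-1}\cdot M \;=\; V(f)^s.
\]
Taking the supremum over $n$ yields $V^*(f)\le V(f)^s<\infty$, which is exactly bounded variation in the sense of $(A^*)$.

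There is essentially no obstacle here: the argument is a one-line consequence of monotonicity of $x\mapsto x^{s-1}$ on the bounded interval $[0,V(f)]$. The only subtlety worth flagging is that the reverse implication fails in general, as Example \ref{comp_exam} already shows (the coordinate function is Lipschitz, hence in $(A^*)$ by Note \ref{conj}, but not in $(A)$), so the containment is strict; this justifies the theorem's wording that the $(A^*)$ class is strictly larger.
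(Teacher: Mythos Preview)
Your proof is correct and is precisely the argument the paper has in mind: the paper's own proof is the single line ``Since $s=\frac{\log 3}{\log 2}>1$, the result follows immediately,'' and your computation $a^s\le M^{s-1}a$ for $0\le a\le M=V(f)$ is exactly the unpacking of that sentence (equivalently, the inequality $\|\cdot\|_{\ell^s}\le\|\cdot\|_{\ell^1}$ for $s\ge 1$). Your additional remark on strictness via Example~\ref{comp_exam} is also in line with the paper's discussion.
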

\begin{proof}
Since $s= \frac{\log 3}{\log 2} > 1$, the result follows immediately.
\end{proof}
To prove an analogous result of univariate case in the current study, that is, a function of bounded variation on interval can be decomposed into difference of two increasing functions, we need the following definitions.
\begin{definition}
Let $ f: SG \rightarrow \mathbb{R}$ be a function. For $x,y \in SG$ we define $x \prec y$ if $w^x \prec w^y.$ A function $f$ is said to be increasing if $f(x) < f(y)$ whenever $x \prec y.$
\end{definition}
\begin{definition}
Let $ f: SG \rightarrow \mathbb{R}$ be a function. For each $x,y \in SG$ and partition $P: w^x =w^0 \prec w^1 \prec w^2 \prec \dots \prec w^n= w^y$ of $\Omega_{[x,y]} ,$ we define variation of $f$ as $$V\Big(f,\phi_{\mathcal{F}}(\Omega_{[x,y]}),P\Big)= \sum_{i=1}^{n} \Big|f\big(\phi_{\mathcal{F}}(\omega^i)\big)-f\big(\phi_{\mathcal{F}}(\omega^{i-1})\big)\Big|.$$ Let us define total variation of $f$ over $\phi_{\mathcal{F}}(\Omega_{[x,y]})$ as $$V\Big(f,\phi_{\mathcal{F}}(\Omega_{[x,y]})\Big):= \sup_{P} V(f,[x,y],P),$$
where the supremum is taken over all partitions $P$ of $\Omega_{[x,y]} .$
\end{definition}
\begin{example}
Let $\mu$ be a Borel measure on SG. A mapping $f: SG \to \mathbb{R}$ defined by $f(x)= \mu\big(\phi_{\mathcal{F}}\big(\Omega_{[q_1,x]}\big)\big)$ and $f(q_1)=0,$ is of bounded variation on SG.
\end{example}
The proofs of the upcoming lemma and theorem are same as that of their counterpart in univariate case. Hence we omit.
\begin{lemma}
If $ f: SG \rightarrow \mathbb{R}$ is a function of bounded variation. then the function $g:SG \rightarrow \mathbb{R}$ defined by $g(x)=V\Big(f,\phi_{\mathcal{F}}(\Omega_{[q_1,x]})\Big)$ is an increasing. 
\end{lemma}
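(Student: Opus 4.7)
The plan is to mimic the classical real-line proof via additivity of variation. Fix $x \prec y$ in $SG$ and write $w^x = \tau_{\mathcal{F}}(x)$, $w^y = \tau_{\mathcal{F}}(y)$. The key identity I would aim for is
\[
V\bigl(f,\phi_{\mathcal{F}}(\Omega_{[q_1,y]})\bigr) \;=\; V\bigl(f,\phi_{\mathcal{F}}(\Omega_{[q_1,x]})\bigr) + V\bigl(f,\phi_{\mathcal{F}}(\Omega_{[x,y]})\bigr),
\]
from which $g(y) - g(x) = V\bigl(f,\phi_{\mathcal{F}}(\Omega_{[x,y]})\bigr) \ge 0$ follows at once, giving monotonicity of $g$. (As in the univariate case, the definition of ``increasing'' here is to be read as non-decreasing; strict inequality corresponds to $f$ having positive variation between $x$ and $y$.)

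To establish the identity, I would first observe that the total ordering $\prec$ on the tops code space makes $\Omega_{[q_1,y]} = \Omega_{[q_1,x]} \cup \Omega_{[x,y]}$, with the two pieces sharing only the endpoint $w^x$. For the $\ge$ direction: given any partition $P_1: \overline{1} = w^0 \prec \cdots \prec w^k = w^x$ of $\Omega_{[q_1,x]}$ and any partition $P_2: w^x = \widetilde{w}^0 \prec \cdots \prec \widetilde{w}^\ell = w^y$ of $\Omega_{[x,y]}$, their concatenation is a partition of $\Omega_{[q_1,y]}$ whose variation sum equals $V(f,\phi_{\mathcal{F}}(\Omega_{[q_1,x]}),P_1) + V(f,\phi_{\mathcal{F}}(\Omega_{[x,y]}),P_2)$; passing to suprema over $P_1$ and $P_2$ yields one inequality.

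For the reverse $\le$ direction, I would take an arbitrary partition $P: \overline{1} = w^0 \prec \cdots \prec w^n = w^y$ of $\Omega_{[q_1,y]}$ and refine it by inserting $w^x$ if not already present. If $w^{j-1} \prec w^x \prec w^j$, then the triangle inequality
\[
\bigl|f(\phi_{\mathcal{F}}(w^j)) - f(\phi_{\mathcal{F}}(w^{j-1}))\bigr| \le \bigl|f(\phi_{\mathcal{F}}(w^j)) - f(x)\bigr| + \bigl|f(x) - f(\phi_{\mathcal{F}}(w^{j-1}))\bigr|
\]
shows that the refined partition has variation sum at least that of $P$. The refined partition then splits into a partition of $\Omega_{[q_1,x]}$ and one of $\Omega_{[x,y]}$, so its variation is bounded above by $V(f,\phi_{\mathcal{F}}(\Omega_{[q_1,x]})) + V(f,\phi_{\mathcal{F}}(\Omega_{[x,y]}))$; taking the supremum over $P$ finishes the additivity.

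The only point requiring care is that inserting $w^x$ really produces a valid ordered partition in $\Omega_{\mathcal{F}}$, and that the concatenation in the easy direction lies inside $\Omega_{\mathcal{F}}$. Both reduce to the fact that $\Omega_{\mathcal{F}}$ is totally ordered by $\prec$ and closed under the operations used (all inserted codes are of the form $\tau_{\mathcal{F}}(z)$ for points $z$ already on the gasket). I expect this bookkeeping — rather than any analytic difficulty — to be the main obstacle, which is precisely why the authors felt comfortable appealing to the one-dimensional template and omitting details.
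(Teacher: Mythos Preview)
Your proposal is correct and is exactly the univariate argument the paper has in mind: the authors omit the proof entirely, stating that it ``is same as that of their counterpart in univariate case,'' and your additivity-of-variation argument via concatenation and refinement of partitions is precisely that counterpart. There is nothing to add.
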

\begin{theorem}
If $ f: SG \rightarrow \mathbb{R}$ is a function of bounded variation in the sense of $(A).$ then there exist two increasing functions $f_1$ and $f_2$ such that $f=f_1 -f_2.$ Similar result holds in terms of definition $(A^*).$
\end{theorem}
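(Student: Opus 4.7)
The plan is to mimic the classical Jordan decomposition on an interval, using the ``variation function'' from the preceding lemma as the monotone upper envelope. Concretely, set
\[
g(x) := V\!\bigl(f,\phi_{\mathcal{F}}(\Omega_{[q_1,x]})\bigr),
\]
and define $f_1 := g$ and $f_2 := g - f$, so that $f = f_1 - f_2$ tautologically. By the preceding lemma, $f_1 = g$ is increasing, hence the entire content of the theorem reduces to showing that $f_2 = g - f$ is also increasing in the $\prec$ order.

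For this, fix $x \prec y$ in $SG$ and let $P_1 : \overline{1} = w^0 \prec w^1 \prec \cdots \prec w^m = w^x$ be any partition of $\Omega_{[q_1,x]}$. Appending $w^y$ produces a partition $P_2 := P_1 \cup \{w^y\}$ of $\Omega_{[q_1,y]}$, and the associated variation sums satisfy the telescoping identity
\[
V\bigl(f,\phi_{\mathcal{F}}(\Omega_{[q_1,y]}),P_2\bigr) = V\bigl(f,\phi_{\mathcal{F}}(\Omega_{[q_1,x]}),P_1\bigr) + |f(y) - f(x)|.
\]
Since $P_2$ is admissible in the supremum defining $g(y)$, taking the supremum over $P_1$ gives
\[
g(y) \;\ge\; g(x) + |f(y) - f(x)| \;\ge\; g(x) + \bigl(f(y) - f(x)\bigr),
\]
so $f_2(y) = g(y) - f(y) \ge g(x) - f(x) = f_2(x)$. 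Hence $f_2$ is non-decreasing along $\prec$, and strictness (as in the author's definition of ``increasing'') can be enforced afterwards by adding the same fixed strictly $\prec$-increasing auxiliary function to both $f_1$ and $f_2$.

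The main obstacle, and the step that genuinely requires care, is the ``similar result'' asserted for $(A^*)$. Attempting the same recipe with $g^*(x) := V^*\!\bigl(f,\phi_{\mathcal{F}}(\Omega_{[q_1,x]})\bigr)$ (a supremum of sums of $s$-th powers) still yields a $\prec$-monotone $g^*$ by partition concatenation, but the concatenation now only delivers the weaker inequality $g^*(y) - g^*(x) \ge |f(y) - f(x)|^s$ with $s = \log 3/\log 2 > 1$, which fails to dominate $f(y) - f(x)$ for small increments, so the Jordan domination step collapses. My workaround would be to sidestep $V^*$ entirely and instead use the positive-variation function
\[
p(x) := \sup_{P} \sum_{i=1}^n \bigl(f(\phi_{\mathcal{F}}(w^i)) - f(\phi_{\mathcal{F}}(w^{i-1}))\bigr)^+,
\]
and then set $f_1 := p + f(q_1)$, $f_2 := p - f + f(q_1)$; monotonicity of $p$ is automatic and $p(y) - p(x) \ge (f(y) - f(x))^+$, so the argument goes through verbatim provided one can verify $p < \infty$ under the $(A^*)$-hypothesis. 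Establishing this finiteness --- presumably by comparing a first-power telescoping chain along $\prec$ to the cellwise sums $\sum_w R_f[u_w(SG)]^s$ and exploiting continuity of $f$ to control the contribution of small increments separately --- is where I expect the real work to lie.
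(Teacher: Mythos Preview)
Your argument for the $(A)$ case is exactly the classical Jordan decomposition via the variation function, and this is precisely what the paper intends: its entire ``proof'' reads ``The proofs of the upcoming lemma and theorem are same as that of their counterpart in univariate case. Hence we omit.'' So on $(A)$ you match the paper.

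Your worry about the $(A^*)$ case is not a technical snag to be patched; it exposes an actual error in the assertion itself (the paper offers no separate argument for $(A^*)$, only the same ``omitted'' pointer). Observe that any $\prec$-increasing function $h:SG\to\mathbb{R}$ is bounded between $h(q_1)$ and $h(q_3)$, and along every partition of $\Omega_{\mathcal{F}}$ its variation sum telescopes to $h(q_3)-h(q_1)$; thus $V(h,SG)<\infty$ in the sense of definition~$(C)$. Consequently any difference $f_1-f_2$ of two $\prec$-increasing functions is of bounded variation in sense~$(C)$, and hence---by the paper's own claimed equivalence of $(A)$ and $(C)$---in sense~$(A)$ as well. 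But the paper itself exhibits the Lipschitz map $(x_1,x_2)\mapsto x_1$ as a function that is of bounded variation in sense~$(A^*)$ yet \emph{not} in sense~$(A)$; such a function therefore admits no decomposition as a difference of $\prec$-increasing functions. The ``similar result'' for $(A^*)$ is false as stated.

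Your positive-variation workaround cannot rescue this either: the classical identities $p+n=V$ and $p-n=f(q_3)-f(q_1)$ (with $n$ the negative variation and $V$ the sense-$(C)$ total variation) force $p=\infty$ whenever $V=\infty$. So the finiteness of $p$ that you flagged as ``where the real work lies'' is in fact unattainable under the bare $(A^*)$ hypothesis---the same Lipschitz example kills it.
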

\begin{theorem}\label{BVSGth3}
If $f :SG \rightarrow \mathbb{R}$ is continuous and of bounded variation in the sense of $(A).$ Then $\dim_B(G_f)= \dim_H(G_f)= \frac{\log3}{\log2}.$
\end{theorem}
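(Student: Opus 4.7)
The plan is to combine the two-sided box-counting estimate of Lemma \ref{BVSGl2} with the Hausdorff lower bound of Lemma \ref{BVSGl1}, and observe that the bounded-variation hypothesis exactly controls the ``oscillation'' contribution in the upper bound so that the ``geometric'' contribution $3^n$ wins.

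First I would fix $\delta_n := 2^{-n}$ and let $M := V(f) = \sup_{n} \sum_{w \in \{1,2,3\}^n} R_f[u_w(SG)] < \infty$, which is finite by the definition $(A)$ of bounded variation. Lemma \ref{BVSGl2} immediately gives
\[
N_{\delta_n}(G_f) \;\le\; 2 \cdot 3^n + 2^n \sum_{w \in \{1,2,3\}^n} R_f[u_w(SG)] \;\le\; 2 \cdot 3^n + M\, 2^n.
\]
Since $3 > 2$, the term $3^n$ dominates, so
\[
\overline{\dim}_B(G_f)
\;\le\; \limsup_{n \to \infty} \frac{\log(2\cdot 3^n + M\, 2^n)}{\log 2^n}
\;=\; \frac{\log 3}{\log 2}.
\]
To pass from the subsequence $\delta_n = 2^{-n}$ to all $\delta \to 0^+$ I would use monotonicity of $N_\delta$: for $\delta \in (2^{-(n+1)}, 2^{-n}]$ one has $N_\delta(G_f) \le N_{2^{-(n+1)}}(G_f) \le 2 \cdot 3^{n+1} + M\, 2^{n+1}$, and the same limit $\log 3/\log 2$ is obtained after dividing by $-\log\delta \ge n\log 2$.

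Finally I combine this with Lemma \ref{BVSGl1}, which yields $\dim_H(G_f) \ge \frac{\log 3}{\log 2}$, and the general inequalities $\dim_H(G_f) \le \underline{\dim}_B(G_f) \le \overline{\dim}_B(G_f)$. Sandwiching gives
\[
\dim_H(G_f) \;=\; \underline{\dim}_B(G_f) \;=\; \overline{\dim}_B(G_f) \;=\; \frac{\log 3}{\log 2},
\]
so in particular the box dimension $\dim_B(G_f)$ exists and equals $\frac{\log 3}{\log 2}$.

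There is no real obstacle: the bounded-variation hypothesis is tailored precisely so that the oscillation sum $\sum_w R_f[u_w(SG)]$ contributes only $O(2^n)$ triangular boxes of side $2^{-n}$, which is negligible compared to the $3^n$ boxes forced by $SG$ itself. The only point requiring care is not using $(A^*)$ (which would give $\sum_w R_f[u_w(SG)]^s \le M$ with $s = \log 3/\log 2$) in place of $(A)$; under $(A)$ the bound $\sum_w R_f[u_w(SG)] \le M$ is direct and no Hölder-type re-scaling is needed. Continuity of $f$ is used only insofar as Lemma \ref{BVSGl2} requires it to relate $R_f$ to the number of $\delta$-cubes above each cell $u_w(SG)$.
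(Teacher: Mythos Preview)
Your proposal is correct and follows essentially the same approach as the paper: invoke Lemma~\ref{BVSGl1} for the lower bound $\dim_H(G_f)\ge \frac{\log 3}{\log 2}$, plug the bounded-variation bound $\sum_w R_f[u_w(SG)]\le M$ into the upper estimate of Lemma~\ref{BVSGl2} to get $N_{2^{-n}}(G_f)\le 2\cdot 3^n + M\,2^n$, and sandwich. Your treatment is in fact slightly more careful than the paper's, since you explicitly justify passing from the dyadic scales $\delta_n=2^{-n}$ to general $\delta\to 0^+$.
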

\begin{proof}
In view of Lemma \ref{BVSGl1}, we have $\underline{\dim}_B G_f  \ge \dim_H G_f \ge \frac{\log3}{\log2}.$
Let $\delta= \frac{1}{2^n}$ for some $ n \in \mathbb{N}.$ From Lemma \ref{BVSGl2}, we know that the number of $\delta-$cubes that intersect the graph of $f$ is
$$N_{\delta}(G_f) \le 2.3^n + 2^n \sum_{w \in \{1,2,3\}^n} R_f[u_w(SG)].$$
Since $f$ is of bounded variation, by definition, we have $ \sum_{w \in \{1,2,3\}^n} R_f[u_w(SG)]$ is bounded for all $n \in \mathbb{N}.$
  That is, there exists $K >0$ such that $$\sum_{w \in \{1,2,3\}^n} R_f[u_w(SG)] \le K $$ for all $ n \in \mathbb{N}$.
 This in turn yields
$$ \overline{\lim}_{\delta \rightarrow 0} \frac{\log N_{\delta}(G_f)}{-\log \delta} \le \lim_{\delta \rightarrow 0}\frac{\log(2.3^n + 2^n K)}{-\log \delta} \le \frac{\log3}{\log2},$$
that is, $ \overline{\dim}_B(G_f)\le \frac{\log3}{\log2}$, completing the proof. 
\end{proof}
\begin{example}
Define a function $f$ on $SG$ as follows 

 \begin{equation*}
 f(x,y) =
 \begin{cases}
 x \sin(\frac{1}{x}) \quad \text{if} \quad x \ne 0\\
 0 \quad \text{otherwise}.
 \end{cases}
 \end{equation*}

 The function $f$ defined above is not of bounded variation in the sense of $(A).$ However, following routine calculations, we deduce  that  $\dim_B(G_f)=\dim_H(G_f)=\frac{\log3}{\log2}.$
\end{example}

  We give an example of a function which is not of bounded variation in the sense of any definition and discontinuous at each point of its domain and whose box dimension and Hausdorff dimension are $\frac{\log3}{\log2}.$
\begin{example}
Define a function $f:SG \rightarrow \mathbb{R}$ as follows
\begin{equation*}
 f(x,y) =
 \begin{cases}
 0 \quad \text{if}\quad (x,y) \in \mathbb{Q}\times \mathbb{Q}\\
 1 \quad \text{otherwise}.
 \end{cases}
 \end{equation*}

  Now, we write the graph of function $f$ as $G_f=\{(x,y,0): (x,y) \in SG  \cap (\mathbb{Q} \times \mathbb{Q}) \} \cup \{(x,y,1): (x,y) \in SG  \cap (\mathbb{Q} \times \mathbb{Q})^c \}:=G_0 \cup G_1$ The first term in the union is countable, so Hausdorff dimension of first term is zero. Using the countable stability property of Hausdorff dimension, we have $\dim_H (G_f) = \frac{\log3}{\log2}.$ We write $ \frac{\log3}{\log2}= \dim_H (G_f) \le \underline{\dim}_B (G_f) \le \overline{\dim}_B (G_f).$ Using a property of upper box dimension, we have  $\overline{\dim}_B(G_0)=\overline{\dim}_B( \overline{G_0})=\frac{\log3}{\log2}$ and $\overline{\dim}_B(G_1)=\overline{\dim}_B( \overline{G_1})=\frac{\log3}{\log2}.$ Since upper box dimension is finitely stable, we get $\overline{\dim}_B (G_f)=\frac{\log3}{\log2}.$ Therefore, $\dim_B (G_f)=\dim_H (G_f)=\frac{\log3}{\log2}.$
\end{example}

The next lemma is very useful to prove that the class of bounded variation functions is closed under addition, subtraction and multiplication.
\begin{lemma}\label{BVSGl3}
Let $f,g : SG \rightarrow \mathbb{R}$ be functions. Let $X \subseteq SG $ be a nonempty subset of the Sierpi\'nski gasket. The following inequality connects the oscillations of $f,g$ and $f+g$ over $X$;
$$ R_{f+g}[X] \le R_{f}[X]+R_{g}[X].$$
If $f$ and $g$ are bounded then 
$$ R_{fg}[X] \le M_g~ R_{f}[X]+M_f ~R_{g}[X],$$
where $M_f= \sup_{x \in X}|f(x)|$ and $M_g= \sup_{x \in X}|g(x)|.$ In particular, we have the following
\begin{itemize}
\item $V(f+g) \le V(f)+V(g)$ and $V(fg) \le M_f ~V(f)+M_g~V(g).$
\item $V^*(f+g) \le 2^{s-1}\big(V^*(f)+V^*(g)\big)$ and $V^*(fg) \le 2^{s-1}\big(M_g^s ~~V^*(f) + M_f^s ~~ V^*(g)\big).$
\end{itemize}
\end{lemma}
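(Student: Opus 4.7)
The plan is to first establish the two pointwise oscillation inequalities by elementary manipulations on the absolute value, and then lift them to the four global statements (two for $V$, two for $V^*$) by summing over all words of length $n$, taking the supremum over $n$, and, in the starred case, inserting a standard convexity estimate.

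For the oscillation of $f+g$ over $X$, for any $x,y\in X$ the ordinary triangle inequality gives
\[
\bigl|(f+g)(x)-(f+g)(y)\bigr|\le |f(x)-f(y)|+|g(x)-g(y)|\le R_f[X]+R_g[X],
\]
and taking the supremum over $x,y\in X$ produces $R_{f+g}[X]\le R_f[X]+R_g[X]$. For $fg$ I would use the standard add-and-subtract trick $f(x)g(x)-f(y)g(y)=g(x)\bigl(f(x)-f(y)\bigr)+f(y)\bigl(g(x)-g(y)\bigr)$, which gives
\[
|f(x)g(x)-f(y)g(y)|\le |g(x)|\,|f(x)-f(y)|+|f(y)|\,|g(x)-g(y)|\le M_g R_f[X]+M_f R_g[X],
\]
and once more a supremum yields the claim.

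Next I would apply these two inequalities to each cell $X=u_w(SG)$ with $w\in\{1,2,3\}^n$ and sum. For the unstarred bounds this is immediate: summing both sides of $R_{f+g}[u_w(SG)]\le R_f[u_w(SG)]+R_g[u_w(SG)]$ over $w$, and then taking the supremum over $n$, gives $V(f+g)\le V(f)+V(g)$; the product inequality is treated identically. For the starred bounds the key extra ingredient is the convexity of $t\mapsto t^s$ on $[0,\infty)$ (note $s=\log 3/\log 2>1$), which yields the inequality $(a+b)^s\le 2^{s-1}(a^s+b^s)$ for all $a,b\ge 0$. Combining this with the cellwise bounds already proved, I get
\[
\bigl(R_{f+g}[u_w(SG)]\bigr)^s\le 2^{s-1}\bigl((R_f[u_w(SG)])^s+(R_g[u_w(SG)])^s\bigr),
\]
and similarly $(R_{fg}[u_w(SG)])^s\le 2^{s-1}\bigl(M_g^s(R_f[u_w(SG)])^s+M_f^s(R_g[u_w(SG)])^s\bigr)$. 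Summing over $w\in\{1,2,3\}^n$ and passing to the supremum over $n$ produces exactly the two $V^*$ inequalities in the statement.

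There is really no hard step here; each piece is a one-line calculation. The only substantive observation — and the only spot where a reader could go wrong — is the convexity inequality $(a+b)^s\le 2^{s-1}(a^s+b^s)$, which is essential for the $V^*$ bounds because one cannot simply distribute the exponent $s$ across a sum when $s>1$. Without it one would be stuck with the non-additive quantity $\sum_w(R_f+R_g)^s$ which has no useful pointwise bound by $V^*(f)+V^*(g)$.
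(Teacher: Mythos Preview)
The paper does not actually supply a proof of this lemma; it is stated and then immediately used, with the surrounding theorems declared to follow ``at once'' from it. Your argument is correct and is precisely the natural one: the pointwise oscillation bounds are the standard triangle/add--subtract identities, the $V$ estimates follow by summing over cells and taking suprema, and the $V^*$ estimates require only the additional convexity inequality $(a+b)^s\le 2^{s-1}(a^s+b^s)$ for $s=\log 3/\log 2>1$, which you correctly identify as the one nontrivial ingredient.
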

\begin{remark}
Comparing the above lemma to \cite[Lemma 4.14]{AT} we see that our total variation with respect to definition $(A)$ follows the expression similar to univariate real-valued case. However, the total variation defined in \cite{AT} does not follow similar expression.
\end{remark}
The following theorem follows at once from the definitions of bounded variation and Lemma \ref{BVSGl3}. Hence we omit the proof.
\begin{theorem}
The class of Bounded Variation functions in the sense of $(A)$ or $(A^*)$ is closed under addition and subtraction.
\end{theorem}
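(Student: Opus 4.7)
The plan is to deduce the theorem directly from Lemma \ref{BVSGl3}, which already contains the essential oscillation estimate. Fix $f, g \in \mathcal{BV}(SG)$ in the sense of $(A)$, so that $V(f), V(g) < \infty$. Applying the bullet point $V(f+g) \le V(f) + V(g)$ from Lemma \ref{BVSGl3} immediately gives $f+g \in \mathcal{BV}(SG)$. To handle subtraction, I would first observe the trivial identity $R_{-g}[X] = R_g[X]$ for every nonempty $X \subseteq SG$, which follows because $|{-g}(x) - ({-g})(y)| = |g(x)-g(y)|$; summing over $w \in \{1,2,3\}^n$ and taking the supremum over $n$ yields $V(-g) = V(g)$. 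Then applying the additive estimate to $f + (-g)$ produces $V(f-g) \le V(f) + V(-g) = V(f) + V(g) < \infty$.

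For definition $(A^{*})$ the same outline works without modification. Lemma \ref{BVSGl3} supplies $V^{*}(f+g) \le 2^{s-1}\bigl(V^{*}(f) + V^{*}(g)\bigr)$, so closure under addition is immediate; the extra constant $2^{s-1}$, which arises from the convexity inequality $(a+b)^{s} \le 2^{s-1}(a^{s}+b^{s})$ for $s = \tfrac{\log 3}{\log 2} > 1$, does not affect finiteness. For subtraction I would again use that raising the sign-invariant identity $R_{-g}[u_w(SG)] = R_g[u_w(SG)]$ to the power $s$ preserves the equality, so $V^{*}(-g) = V^{*}(g)$, and then invoke the additive bound on $f+(-g)$.

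No step presents a real obstacle: the analytic content lives entirely in the oscillation inequality $R_{f+g}[X] \le R_{f}[X] + R_{g}[X]$, already proved in Lemma \ref{BVSGl3}, while the passage from addition to subtraction is nothing more than the parity of the absolute value. This is exactly why the authors permit themselves to omit the proof.
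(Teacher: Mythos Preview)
Your argument is correct and matches the paper's approach exactly: the authors state that the result follows at once from the definitions and Lemma \ref{BVSGl3} and omit the proof, and your write-up simply fills in those routine details. The only addition you make, the sign-invariance $R_{-g}[X]=R_g[X]$ to reduce subtraction to addition, is the obvious step implicit in the paper's omission.
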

\begin{theorem}\label{BVSGth4}
The class of Bounded Variation functions in the sense of $(A)$ or $(A^*)$ is closed under multiplication.
\end{theorem}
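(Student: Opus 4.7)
The plan is to reduce the theorem to the two inequalities stated at the end of Lemma \ref{BVSGl3}, namely
$$
V(fg) \;\le\; M_f\, V(f) + M_g\, V(g), \qquad V^*(fg) \;\le\; 2^{s-1}\bigl(M_g^{\,s} V^*(f) + M_f^{\,s} V^*(g)\bigr),
$$
where $M_f = \sup_{SG}|f|$ and $M_g = \sup_{SG}|g|$. If these suprema are finite, each right-hand side is finite whenever $V(f),V(g)$ (resp.\ $V^*(f),V^*(g)$) are, and the theorem follows in both cases. So the only real step is to check that every function of bounded variation in either sense is automatically bounded on $SG$, which is what lets me upgrade the cell-local suprema in Lemma \ref{BVSGl3} to uniform constants.

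For the boundedness step I would argue as follows. The three first-level cells $u_1(SG), u_2(SG), u_3(SG)$ cover $SG$ and any two of them meet only in a single vertex $q_{ij}$, so by concatenating oscillations across the shared vertex one gets
$$
R_f[SG] \;\le\; R_f[u_1(SG)] + R_f[u_2(SG)] + R_f[u_3(SG)].
$$
In case $(A)$ the right-hand side is bounded by $V(f)$; in case $(A^*)$ each $R_f[u_i(SG)]$ is one of the three summands whose $s$-th power is controlled by $V^*(f)$, so $R_f[u_i(SG)] \le V^*(f)^{1/s}$ and hence $R_f[SG] \le 3\, V^*(f)^{1/s}$. Fixing $q_1 \in SG$ gives $M_f \le |f(q_1)| + R_f[SG] < \infty$, and the same holds for $g$. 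Substituting into the two displayed inequalities yields $V(fg) < \infty$ and $V^*(fg) < \infty$, completing the proof in both cases.

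The argument is essentially a corollary of Lemma \ref{BVSGl3}, and no continuity hypothesis on $f$ or $g$ is required, which matches the setup of the theorem. The one place where genuine content enters, and which I would regard as the main obstacle, is the preliminary boundedness observation: the $M_f,M_g$ produced by Lemma \ref{BVSGl3} on a piece $u_w(SG)$ a priori depend on $w$, and without a uniform bound in $w$ the summation over $\{1,2,3\}^n$ could not be collapsed into a constant multiple of $V(f)$ (or $V^*(f)$). Once this short covering-plus-concatenation argument is in place, the rest is routine.
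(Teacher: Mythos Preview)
Your proposal is correct and follows essentially the same route as the paper: observe that bounded variation (in either sense) forces $f$ to be bounded, and then invoke the product inequality from Lemma~\ref{BVSGl3}. The paper's proof simply asserts the boundedness step without justification, whereas you supply the short concatenation argument across the first-level cells; apart from this added detail, the two arguments are identical.
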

\begin{proof}
We see that if $f$ is of bounded variation then it is bounded. Using Lemma \ref{BVSGl3}, the result follows.
\end{proof}
\begin{theorem}\label{BVSGth5}
The class of Bounded Variation functions in the sense of $(A)$ is closed under division provided denominator bounded away from zero. Similar result holds in terms of definition $(A^*).$
\end{theorem}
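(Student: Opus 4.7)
The plan is to reduce division to multiplication by proving that the reciprocal of a bounded variation function (with denominator bounded away from zero) is again of bounded variation, and then invoke Theorem \ref{BVSGth4}. Write $f/g = f \cdot (1/g)$ and assume $|g(x)| \ge c > 0$ for all $x \in SG$ together with $g \in \mathcal{BV}(SG)$ (in the relevant sense).

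The key elementary estimate is on the oscillation of $1/g$ on any subcell. For $x,y \in u_w(SG)$,
\[
\Bigl|\tfrac{1}{g(x)} - \tfrac{1}{g(y)}\Bigr| = \frac{|g(y)-g(x)|}{|g(x)|\,|g(y)|} \le \frac{1}{c^2}\,|g(x)-g(y)|,
\]
so taking suprema gives
\[
R_{1/g}[u_w(SG)] \le \tfrac{1}{c^2}\, R_g[u_w(SG)].
\]
Summing over $w \in \{1,2,3\}^n$ and taking the supremum over $n$ yields $V(1/g) \le \tfrac{1}{c^2}\,V(g) < \infty$ in the sense of $(A)$. For the $(A^*)$ version, raising the inequality to the power $s = \log 3/\log 2$ gives $R_{1/g}[u_w(SG)]^s \le c^{-2s} R_g[u_w(SG)]^s$, and the same summation shows $V^*(1/g) \le c^{-2s} V^*(g) < \infty$. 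Hence $1/g \in \mathcal{BV}(SG)$ in either sense.

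Having established that $1/g$ is of bounded variation, and noting that $f$ is bounded (every bounded variation function is bounded, which follows directly from the definition applied with $n=0$), Theorem \ref{BVSGth4} applied to $f$ and $1/g$ gives $f/g = f \cdot (1/g) \in \mathcal{BV}(SG)$ in the corresponding sense. There is no real obstacle here; the only point to double-check is that the constant $c^{-2s}$ in the $(A^*)$ estimate is finite, which is immediate from the hypothesis $|g| \ge c > 0$. The entire proof is thus reduced to the pointwise reciprocal inequality above and one application of the multiplicative closure already proved.
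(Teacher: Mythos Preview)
Your proof is correct and, in fact, cleaner than the paper's. Both arguments reduce to showing that $1/g$ is of bounded variation and then invoke Theorem~\ref{BVSGth4}. The difference lies in how the oscillation of $1/g$ is controlled. You use the pointwise identity
\[
\Bigl|\tfrac{1}{g(x)} - \tfrac{1}{g(y)}\Bigr| = \frac{|g(x)-g(y)|}{|g(x)|\,|g(y)|} \le \frac{1}{c^{2}}\,|g(x)-g(y)|,
\]
which immediately gives $R_{1/g}[u_w(SG)] \le c^{-2}R_g[u_w(SG)]$ on every cell, regardless of the sign behaviour of $g$. The paper instead splits the cells into those where $f$ changes sign and those where it does not, bounds the oscillation of $1/f$ by $2/m$ on the former and by $R_f/m^{2}$ on the latter, and then controls the \emph{number} of sign-changing cells via $M \ge 2mN_n$. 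This detour yields the bound $2M/m^{2}$, twice what your direct estimate gives. Your argument avoids the case split entirely and works verbatim for the $(A^*)$ version by raising to the power $s$, whereas the paper merely asserts that ``the similar technique will work'' there. The paper's route does illustrate an interesting auxiliary fact (that the number of sign-changing cells is uniformly bounded), but for the theorem as stated your approach is the more economical one.
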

\begin{proof}
In the light of Theorem \ref{BVSGth4}, it suffices, for the first statement, to consider the case of $ \frac{1}{f}$ for $f$ in the class of bounded variation and $|f| \ge m > 0.$\\
Let $M$ be the total variation of $f$, and for each $n$ let $N_n$ be the number of cells in the net of $3^n$ cells, in which $f$ changes sign; then
\begin{equation}\label{useith}
 M \ge \sum_{w \in \{1,2,3\}^n} R_f[u_w(SG)] \ge 2m N_n.
 \end{equation}
Let us set $$ \sum_{w \in \{1,2,3\}^n} R_{1/f}[u_w(SG)]= {\Sigma}' + {\Sigma }'' ,$$
where $ \Sigma'$ representing the sum over the cells in which $f$ changes sign and $ \Sigma''$ the sum over the remaining cells. In each cell of the first set, we have $ R_{1/f}[u_w(SG)] \le \frac{2}{m}.$ We denote the least upper bound and greatest lower bound of $|f|$ in the $u_w(SG)$ by $M_w$ and $m_w$ respectively. Now, for each triangular cell of the second set, we get $$ R_{1/f}[u_w(SG)] = \frac{1}{m_w}- \frac{1}{M_w}\le \frac{(M_w- m_w)}{m^2}= \frac{R_f[u_w(SG)]}{m^2}.$$
Using \ref{useith}, we obtain 
$$ \sum_{w \in \{1,2,3\}^n} R_{1/f}[u_w(SG)] \le \frac{M}{m^2}+ \frac{2 N_n}{m} \le \frac{2M}{m^2}$$ for all $n \in \mathbb{N}.$ Hence the proof is complete.
We note that the similar technique will work for proving the result in terms of definition $(A^*).$

\end{proof}

\begin{theorem}\label{BVSGth6}
If $f: SG \rightarrow \mathbb{R}$ is of bounded variation on $SG$ in the sense of $(A)$ or $(A^*)$ then $f$ is continuous almost everywhere in the sense of $\frac{\log3}{\log2}-$dimensional Hausdorff measure.
\end{theorem}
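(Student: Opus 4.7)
The plan is to show that the discontinuity set $D$ of $f$ has $\mathcal{H}^s$-measure zero, where $s = \frac{\log 3}{\log 2}$. First, decompose $D = \bigcup_{k \ge 1} D_k$ with
\[
D_k := \Big\{ x \in SG : \inf_{r > 0} \sup_{y, z \in B(x,r) \cap SG} |f(y)-f(z)| \ge \tfrac{1}{k} \Big\}.
\]
By countable subadditivity of Hausdorff measure, it suffices to prove $\mathcal{H}^s(D_k) = 0$ for each $k \in \mathbb{N}$.

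The key step is a cell-capture observation: for every $x \in D_k \setminus V_*$ and every $n \in \mathbb{N}$, the cell $u_w(SG)$ of level $n$ containing $x$ satisfies $R_f[u_w(SG)] \ge 1/k$. Indeed, cells at level $n$ meet only along $V_n \subseteq V_*$, so $x \notin V_*$ forces $x$ to lie in the topological interior (within $SG$) of the unique such cell. Hence for sufficiently small $r>0$, $B(x,r)\cap SG \subseteq u_w(SG)$, and the membership $x \in D_k$ then forces $R_f[u_w(SG)] \ge 1/k$. Writing $\mathcal{W}_n(k) := \{w \in \{1,2,3\}^n : R_f[u_w(SG)] \ge 1/k\}$, this yields the covering
\[
D_k \setminus V_* \;\subseteq\; \bigcup_{w \in \mathcal{W}_n(k)} u_w(SG), \qquad \text{valid for every } n \in \mathbb{N}.
\]

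Now I count $|\mathcal{W}_n(k)|$. Under definition $(A)$,
\[
\frac{|\mathcal{W}_n(k)|}{k} \;\le\; \sum_{w \in \mathcal{W}_n(k)} R_f[u_w(SG)] \;\le\; V(f),
\]
so $|\mathcal{W}_n(k)| \le k\, V(f)$; the identical argument applied to $R_f^s$ under $(A^*)$ gives $|\mathcal{W}_n(k)| \le k^s\, V^*(f)$. Each cell $u_w(SG)$ has diameter at most $\mathrm{diam}(SG)/2^n$, so
\[
\mathcal{H}^s_{\mathrm{diam}(SG)/2^n}(D_k \setminus V_*) \;\le\; |\mathcal{W}_n(k)| \cdot \frac{\mathrm{diam}(SG)^s}{2^{ns}} \;=\; |\mathcal{W}_n(k)| \cdot \frac{\mathrm{diam}(SG)^s}{3^n},
\]
where I used the crucial identity $2^{ns} = 3^n$. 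Since $|\mathcal{W}_n(k)|$ is bounded by a constant depending only on $k$ and $f$, the right-hand side tends to $0$ as $n \to \infty$, giving $\mathcal{H}^s(D_k \setminus V_*) = 0$. The set $V_* = \bigcup_n V_n$ is countable, hence $\mathcal{H}^s$-null, so $\mathcal{H}^s(D_k) = 0$, and summing over $k$ concludes the proof.

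The main (and essentially only) obstacle is the cell-capture step: one must exploit that points outside the vertex skeleton $V_*$ lie in the relative interior of every enclosing cell, so their oscillation is \emph{inherited} by the cell. Everything else is a matching of the bounded-variation budget against the $(\mathrm{diam})^s$ weighting, the match being exact because of the $2^{ns} = 3^n$ cancellation built into the self-similar structure of $SG$.
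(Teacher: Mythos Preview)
Your proof is correct and follows essentially the same strategy as the paper's: threshold the oscillation, observe that a cell containing a point of large oscillation must itself have large $R_f$, and balance the bounded-variation budget against the number (equivalently, $\mathcal{H}^s$-mass) of such cells. Your version argues directly rather than by contradiction and is more careful in explicitly disposing of the countable vertex set $V_*$---a point the paper elides---but the underlying idea is the same.
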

\begin{proof}
Let $\epsilon > 0.$ Define $X_{\epsilon} = \{ x \in SG: \text{points at which f has a saltus} \ge \epsilon \}.$ Assume $\frac{\log3}{\log2}$- dimensional Hausdorff measure of $X_{\epsilon}$ is positive, that is, $\mathcal{H}^s(X_{\epsilon}) > 0,$ where $s=\frac{\log3}{\log2}.$ Let the $\frac{\log3}{\log2}$-dimensional Hausdorff measure of SG be denoted by $\mathcal{H}^s(SG).$ For a triangular net of $3^n$ cells, we observe that at least $\left \lceil {\frac{3^n \mathcal{H}^s(X_{\epsilon}) }{\mathcal{H}^s(SG)}}\right \rceil$ cells of the triangular net must contain points of $X_{\epsilon}$, where $\left \lceil {.}\right \rceil $ denotes the ceiling function. Hence, we obtain $$    \sum_{w \in \{1,2,3\}^n} R_f[u_w(SG)] \ge \left \lceil {\frac{3^n \mathcal{H}^s(X_{\epsilon}) }{\mathcal{H}^s(SG)}}\right \rceil \epsilon ,$$ 
which is unbounded unless $ \mathcal{H}^s(X_{\epsilon})$ is zero. Therefore, if $f$ is of bounded variation in the sense of $(A)$, $ \mathcal{H}^s(X_{\epsilon})$ must vanish for every $ \epsilon > 0$, and by a classical argument, it follows that the discontinuities of $f$ are a set of $\frac{\log3}{\log2}$-dimensional Hausdorff zero measure. Similarly, we obtain the result for $(A^*).$ 
\end{proof}
\begin{theorem}
If $f:SG \rightarrow \mathbb{R}$ is continuous and of bounded variation on $SG$ in the sense of $(A)$ or $(A^*)$ then $ 0< \mathcal{H}^s(G_f)<\infty,$ where $s =\frac{\log3}{\log2}.$ In particular, $\dim_H(G_f)= s.$
\end{theorem}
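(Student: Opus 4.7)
The goal splits into a lower bound and an upper bound on $\mathcal{H}^s(G_f)$; the conclusion $\dim_H(G_f)=s$ then follows from the definition of Hausdorff dimension. I would present the two bounds separately.

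For the lower bound I would reuse the projection map from the proof of Lemma \ref{BVSGl1}. The map $\mathcal{T}_f\colon G_f\to SG$ sending $(t,f(t))\mapsto t$ is $1$-Lipschitz and surjective, so by the standard behaviour of Hausdorff measure under Lipschitz maps one has $\mathcal{H}^s(SG)\le \mathcal{H}^s(G_f)$, which is strictly positive since the Sierpi\'nski gasket carries positive $s$-dimensional Hausdorff measure at its similarity dimension $s=\log 3/\log 2$.

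For the upper bound under definition $(A)$, I would quantify the covering argument that underlies Theorem \ref{BVSGth3}. Fix $n\in\mathbb{N}$ and set $\delta_n=1/2^n$. By Lemma \ref{BVSGl2} together with the bounded variation hypothesis $\sum_{w\in\{1,2,3\}^n} R_f[u_w(SG)]\le V(f)$, the graph $G_f$ is covered by at most $N_{\delta_n}(G_f)\le 2\cdot 3^n+2^n V(f)$ cubes of side $\delta_n$, each of diameter $\sqrt 3\,\delta_n$. Since $\delta_n^{s}=3^{-n}$, one gets
\[
\mathcal{H}^s_{\sqrt 3\delta_n}(G_f)\;\le\;N_{\delta_n}(G_f)\,(\sqrt 3\,\delta_n)^s\;\le\;3^{s/2}\!\left(2+V(f)\,\Bigl(\tfrac{2}{3}\Bigr)^{n}\right),
\]
and letting $n\to\infty$ gives $\mathcal{H}^s(G_f)\le 2\cdot 3^{s/2}<\infty$.

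For definition $(A^*)$ the hypothesis controls $\sum_w R_f[u_w(SG)]^{s}$, not $\sum_w R_f[u_w(SG)]$, so I would bridge the two by Hölder's inequality applied to the $3^n$ terms with conjugate exponents $s$ and $s/(s-1)$:
\[
\sum_{w\in\{1,2,3\}^n} R_f[u_w(SG)]\;\le\;\Bigl(\sum_{w\in\{1,2,3\}^n} R_f[u_w(SG)]^{s}\Bigr)^{1/s}\!(3^{n})^{(s-1)/s}\;\le\;V^*(f)^{1/s}\,(3/2)^{n},
\]
using $(s-1)/s=\log(3/2)/\log 3$. Plugging this into Lemma \ref{BVSGl2} produces $N_{\delta_n}(G_f)\le C\cdot 3^{n}$, and the same cube-to-diameter estimate as above yields $\mathcal{H}^s(G_f)<\infty$. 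Combining the two bounds we conclude $0<\mathcal{H}^s(G_f)<\infty$, from which $\dim_H(G_f)=s$ is immediate.

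The only delicate point, and the part where it is easy to slip, is the passage from $(A^*)$ to the summable quantity $\sum_w R_f[u_w(SG)]$ needed by Lemma \ref{BVSGl2}; getting the right Hölder exponent (and checking that the resulting $(3/2)^n$ is exactly absorbed by the $2^n$ factor to yield $3^n$) is the key quantitative step.
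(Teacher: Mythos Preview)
Your argument is correct. The lower bound via the $1$-Lipschitz projection $\mathcal{T}_f$ is exactly what the paper does.

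For the upper bound your route differs from the paper's in two respects, and the comparison is worth noting. First, the paper does not go through the cube-counting Lemma~\ref{BVSGl2}; instead it covers $G_f$ directly by the $3^n$ boxes $F_w=u_w(SG)\times[\min_{u_w(SG)}f,\max_{u_w(SG)}f]$ and bounds $\sum_w|F_w|^s\le 2^{s/2}\sum_w\max\{3^{-n},(R_f[u_w(SG)])^s\}$. This makes the case $(A^*)$ entirely transparent, since $(R_f[u_w(SG)])^s$ already appears, and the sum is dominated by $2^{s/2}(1+V^*(f))$ with no further work; the paper then handles $(A)$ by the crude inequality $t^s\le t$ for $t\le 1$. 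Your approach reverses the emphasis: Lemma~\ref{BVSGl2} is tailored to the linear sum $\sum_w R_f[u_w(SG)]$, so $(A)$ is immediate, and you recover $(A^*)$ by the H\"older step $\sum_w R_f\le (\sum_w R_f^{\,s})^{1/s}(3^n)^{(s-1)/s}=V^*(f)^{1/s}(3/2)^n$, which combines with the factor $2^n$ to give exactly $3^n$. Both routes yield the same finite bound; the paper's is shorter for $(A^*)$ and avoids the H\"older exponent bookkeeping, while yours has the virtue of reusing the existing counting lemma and giving an explicit numerical bound $\mathcal{H}^s(G_f)\le 2\cdot 3^{s/2}$ in case $(A)$.
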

\begin{proof}
We only prove the result in terms of definition $(A)$ because for $(A^*)$ the result follows immediately on similar lines. 
Let us first define a mapping $\mathcal{T}_f : G_f \rightarrow SG $ by $\mathcal{T}_f((t,f(t)))=t.$ Then
$$\|\mathcal{T}_f((t,f(t)))-\mathcal{T}_f((u,f(u)))\|_2 = \|t -u\|_2 \le \|(t,f(t))-(u,f(u))\|_2.$$ Therefore, $\mathcal{T}_f$ is a Lipschitz map. Using a properties of Hausdorff measure, we have $\mathcal{H}^s (\mathcal{T}_f(G_f)) \le \mathcal{H}^s (G_f).$ It can be straightforwardly checked that the mapping $\mathcal{T}_f$ is onto. Hence $\mathcal{H}^s (G_f)>0.$
\par
 
Now, using a natural covering, we have 
\begin{equation}
\begin{aligned}
\sum_{w \in \{1,2,3\}^n} |F_w|^s & \le \sum_{w \in \{1,2,3\}^n} 2^{s/2} \Big(\max\Big\{\frac{1}{2^n}, R_f[u_w(SG)]\Big\}\Big)^s \\ & = \sum_{w \in \{1,2,3\}^n} 2^{s/2} \max\Big\{\Big(\frac{1}{2^n}\Big)^s, (R_f[u_w(SG)])^s\Big\} \\ & =\sum_{w \in \{1,2,3\}^n} 2^{s/2} \max\Big\{\frac{1}{3^n}, (R_f[u_w(SG)])^s\Big\} \\ & \le \sum_{w \in \{1,2,3\}^n} 2^{s/2} \max\Big\{\frac{1}{3^n}, R_f[u_w(SG)]\Big\},
\end{aligned}
\end{equation}
where $F_w= u_w(SG) \times R_f[u_w(SG)].$ Since both $\sum_{n=1}^{\infty} \frac{1}{3^n}$ and $\sup_{n \in \mathbb{N}} R_f[u_w(SG)]$ are finite, we deduce that $\sum_{w \in \{1,2,3\}^n} |F_w|^s < \infty .$ From the hypothesis of bounded variation, the quantity $\max\{\frac{1}{3^n}, R_f[u_w(SG)]\}$ can be made arbitrarily small. That is, for any $\delta >  0,$ one can choose $n$ large enough such that 
\[
|F_w| \le 2^{s/2} \max\Big\{\frac{1}{3^n}, R_f[u_w(SG)]\Big\} \le \delta.
\]
 Therefore, $\mathcal{H}^s_{\delta}(G_f) \le \sum_{w \in \{1,2,3\}^n} |F_w|^s < \infty.$ Consequently, 
$\mathcal{H}^s (G_f)< \infty.$
\end{proof}

\section{Conclusion}
We introduced a new definition of bounded variation on the Sierpi\'nski gasket. In the light of Note \ref{conj}, we claim that our approach is different from that of \cite{AT}, and also very useful in applications. In the paper, we proved various properties of a bounded variation function analogous to univariate real-valued case. We believe that our definition can be extended to more general class of fractals such as finitely ramified cell structure, see, for instance, \cite{Tep}. 

 \subsection*{Acknowledgements}
  The first author expresses his gratitude to the University Grants
     Commission (UGC), India for financial support.

\bibliographystyle{amsplain}

\end{document}